\newcounter{constant}
\newcommand{\newconstant}[1]{\refstepcounter{constant}\label{#1}}
\newcommand{\useconstant}[1]{C_{\ref{#1}}}
\newcommand{\defconstant}[1]{ \newconstant{c_#1}\expandafter\newcommand\csname co#1\endcsname{\useconstant{c_#1}} }
\renewcommand*{\backref}[1]{}
\renewcommand*{\backrefalt}[4]{%
    \ifcase #1 (Not cited.)%
    \or        (p.\,#2)%
    \else      (pp.\,#2)%
    \fi}
\begin{document}

\newtheorem{theorem}{Theorem}
\newtheorem{lemma}[theorem]{Lemma}
\newtheorem{example}[theorem]{Example}
\newtheorem{algol}{Algorithm}
\newtheorem{corollary}[theorem]{Corollary}
\newtheorem{prop}[theorem]{Proposition}
\newtheorem{definition}[theorem]{Definition}
\newtheorem{question}[theorem]{Question}
\newtheorem{problem}[theorem]{Problem}
\newtheorem{remark}[theorem]{Remark}
\newtheorem{conjecture}[theorem]{Conjecture}

\newcommand{\llrrparen}[1]{
  \left(\mkern-3mu\left(#1\right)\mkern-3mu\right)}

\newcommand{\commM}[1]{\marginpar{%
\begin{color}{red}
\vskip-\baselineskip 
\raggedright\footnotesize
\itshape\hrule \smallskip M: #1\par\smallskip\hrule\end{color}}}

\newcommand{\commA}[1]{\marginpar{%
\begin{color}{blue}
\vskip-\baselineskip 
\raggedright\footnotesize
\itshape\hrule \smallskip A: #1\par\smallskip\hrule\end{color}}}
\def\xxx{\vskip5pt\hrule\vskip5pt}


\def\cA{{\mathcal A}}
\def\cB{{\mathcal B}}
\def\cC{{\mathcal C}}
\def\cD{{\mathcal D}}
\def\cE{{\mathcal E}}
\def\cF{{\mathcal F}}
\def\cG{{\mathcal G}}
\def\cH{{\mathcal H}}
\def\cI{{\mathcal I}}
\def\cJ{{\mathcal J}}
\def\cK{{\mathcal K}}
\def\cL{{\mathcal L}}
\def\cM{{\mathcal M}}
\def\cN{{\mathcal N}}
\def\cO{{\mathcal O}}
\def\cP{{\mathcal P}}
\def\cQ{{\mathcal Q}}
\def\cR{{\mathcal R}}
\def\cS{{\mathcal S}}
\def\cT{{\mathcal T}}
\def\cU{{\mathcal U}}
\def\cV{{\mathcal V}}
\def\cW{{\mathcal W}}
\def\cX{{\mathcal X}}
\def\cY{{\mathcal Y}}
\def\cZ{{\mathcal Z}}

\def\C{\mathbb{C}}
\def\D{\mathbb{D}}
\def\I{\mathbb{I}}
\def\F{\mathbb{F}}
\def\H{\mathbb{H}}
\def\K{\mathbb{K}}
\def\G{\mathbb{G}}
\def\Z{\mathbb{Z}}
\def\R{\mathbb{R}}
\def\Q{\mathbb{Q}}
\def\N{\mathbb{N}}
\def\M{\textsf{M}}
\def\U{\mathbb{U}}
\def\P{\mathbb{P}}
\def\A{\mathbb{A}}
\def\p{\mathfrak{p}}
\def\n{\mathfrak{n}}
\def\X{\mathcal{X}}
\def\x{\textrm{\bf x}}
\def\w{\textrm{\bf w}}
\def\z{\mathrm{z}}
\def\ord{\mathrm{ord}}
\def\ovQ{\overline{\Q}}
\def\rank#1{\mathrm{rank}#1}
\def\wf{\widetilde{f}}
\def\wg{\widetilde{g}}
\def\comp{\hskip -2.5pt \circ  \hskip -2.5pt}
\def\({\left(}
\def\){\right)}
\def\[{\left[}
\def\]{\right]}
\def\<{\langle}
\def\>{\rangle}
\newcommand*\diff{\mathop{}\!\mathrm{d}}
\def\Lip{\mathrm{Lip}}

\def\BP{\mathbb{P}_{\mathrm{Berk}}^1}
\def\BA{\mathbb{A}_{\mathrm{Berk}}^1}
\def\cJBv{\mathcal{J}_{v,\mathrm{Berk}}}
\def\cFBv{\mathcal{F}_{v,\mathrm{Berk}}}

\def\gen#1{{\left\langle#1\right\rangle}}
\def\genp#1{{\left\langle#1\right\rangle}_p}
\def\genPs{{\left\langle P_1, \ldots, P_s\right\rangle}}
\def\genPsp{{\left\langle P_1, \ldots, P_s\right\rangle}_p}

\def\e{e}

\def\eq{\e_q}
\def\fh{{\mathfrak h}}

\def\lcm{{\mathrm{lcm}}\,}

\def\l({\left(}
\def\r){\right)}
\def\fl#1{\left\lfloor#1\right\rfloor}
\def\rf#1{\left\lceil#1\right\rceil}
\def\mand{\qquad\mbox{and}\qquad}

\def\jt{\tilde\jmath}
\def\ellmax{\ell_{\rm max}}
\def\llog{\log\log}

\def\diam{{\mathrm{diam}}}
\def\m{{\mathfrak{m}}}
\def\ud{{\mathrm{d}}}
\def\ch{\hat{h}}
\def\GL{{\rm GL}}
\def\Orb{\mathrm{Orb}}
\def\Per{\mathrm{Per}}
\def\larg{\mathrm{larg}}
\def\Preper{\mathrm{Preper}}
\def \S{\mathcal{S}}
\def\vec#1{\mathbf{#1}}
\def\ov#1{{\overline{#1}}}
\def\Gal{{\rm Gal}}

\newcommand{\bfalpha}{{\boldsymbol{\alpha}}}
\newcommand{\bfomega}{{\boldsymbol{\omega}}}

\newcommand{\canheight}[1]{{\hat h_{#1}}}
\newcommand{\loccanheight}[2]{{\hat h_{#2,#1}}}
\newcommand{\relsupreper}[3]{{\cP_{#1,#2}(#3)}}
\newcommand{\disunit}[3]{{\cA_{#1,#2}(#3)}}
\def\rat{f}
\def\pol{f}
\def\unipol{f_c}

\newcommand{\Ch}{{\operatorname{Ch}}}
\newcommand{\Elim}{{\operatorname{Elim}}}
\newcommand{\proj}{{\operatorname{proj}}}
\newcommand{\h}{{\operatorname{h}}}

\newcommand{\hh}{\mathrm{h}}
\newcommand{\aff}{\mathrm{aff}}
\newcommand{\Spec}{{\operatorname{Spec}}}
\newcommand{\Res}{{\operatorname{Res}}}

\numberwithin{equation}{section}
\numberwithin{theorem}{section}

\def\house#1{{%
    \setbox0=\hbox{$#1$}
    \vrule height \dimexpr\ht0+1.4pt width .5pt depth \dimexpr\dp0+.8pt\relax
    \vrule height \dimexpr\ht0+1.4pt width \dimexpr\wd0+2pt depth \dimexpr-\ht0-1pt\relax
    \llap{$#1$\kern1pt}
    \vrule height \dimexpr\ht0+1.4pt width .5pt depth \dimexpr\dp0+.8pt\relax}}

\newcommand{\Address}{{
\bigskip
\footnotesize
\textsc{Centro di Ricerca Matematica Ennio De Giorgi, Scuola Normale Superiore, Pisa, 56126, Italy}\par\nopagebreak
\textit{E-mail address:} \texttt{marley.young@sns.it}
}}

\subjclass[2020]{37P05, 11G50, 11J86}

\title[]
{$S$-integral preperiodic points for monomial semigroups over number fields}

\begin{abstract}
We consider semigroup dynamical systems defined by several monnomials over a number field $K$. We prove a finiteness result for preperiodic points of such systems which are $S$-integral with respect to a non-preperiodic point $\beta$, which is uniform as $\beta$ varies over number fields of bounded degree. This generalises results of Baker, Ih and Rumely, which were made uniform by Yap, and verifies a special case of a natural generalisation of a conjecture of Ih.
\end{abstract}

\author {Marley Young}

\maketitle


\section{Introduction}

Let $K$ be a number field with algebraic closure $\overline K$, let $S$ be a finite set of places of $K$, containing all the archimedean ones, and let $\alpha,\beta \in \overline K$. We say that $\alpha$ is $S$-integral relative to $\beta$ if no conjugate of $\alpha$ meets any conjugate of $\beta$ at primes lying outside of $S$. 

Given a rational map $f: \P^1 \to \P^1$ of degree at least 2 defined over $K$, let $f^n$ denote the $n$-fold composition given by
$$
f^n = \underbrace{f \circ \cdots \circ f}_{n \text{ times}}.
$$
A point $\alpha \in \P^1$ is \emph{preperiodic} for $f$ if $f^{n+m}(\alpha) = f^m(\alpha)$ for some $n \geq 1$ and $m \geq 0$. By analogy with well-known results in Diophantine geometry, S. Ih has conjectured the following finiteness property for $S$-integral preperiodic points.

\begin{conjecture}[Ih]
\label{conj:Ih}
Let $f:\P^1 \to \P^1$ be a rational map of degree at least 2, defined over $K$, and $\beta \in \P^1(\overline K)$ a point which is not preperiodic for $f$. Then there are only finitely many preperiodic points $\alpha \in \P^1(\overline K)$ for $f$ which are $S$-integral relative to $\beta$.
\end{conjecture}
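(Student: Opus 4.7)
The natural attack is arithmetic equidistribution combined with a product-formula argument. Suppose for contradiction that there is an infinite sequence of distinct preperiodic points $\alpha_n \in \P^1(\ov K)$ for $f$ which are $S$-integral relative to $\beta$. Since preperiodic points have canonical height $\hat h_f(\alpha_n) = 0$, Northcott forces the degrees $[K(\alpha_n):K]$ to tend to infinity. At each place $v$ of $K$, the Galois-averaged Dirac measures supported on the conjugates of $\alpha_n$ then equidistribute weakly on the Berkovich projective line $\BP$ against the canonical $f$-invariant measure $\mu_{f,v}$, by the arithmetic equidistribution theorem of Baker--Rumely, Chambert-Loir and Favre--Rivera-Letelier for sequences of points of small height.

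Next, I would test these measures against the local function $g_{\beta,v}(x) = -\log\|x,\beta\|_v$, where $\|\cdot,\cdot\|_v$ denotes the $v$-adic chordal distance on $\P^1$, extended to $\BP$. Integrating $g_{\beta,v}$ against the Galois-orbit measure of $\alpha_n$ computes the local part at $v$ of the symmetrized intersection of $\alpha_n$ with $\beta$. The $S$-integrality hypothesis forces these local integrals to vanish at every non-archimedean $v \notin S$, so by the product formula the total contribution across all $v$ is supported on $S$ and is bounded uniformly in $n$. On the other hand, passing formally to the limit yields $\sum_v \int g_{\beta,v}\, d\mu_{f,v}$, which up to a normalization equals the canonical height $\hat h_f(\beta)$, and this is strictly positive whenever $\beta$ is not preperiodic (Call--Silverman). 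A contradiction would follow.

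The main obstacle is the logarithmic singularity of $g_{\beta,v}$ at $\beta$: equidistribution applies to continuous test functions, while $g_{\beta,v}$ is unbounded above. The standard remedy is to regularize by truncation, say replacing $g_{\beta,v}$ with $\min(g_{\beta,v}, M)$ and letting $M \to \infty$, and to control the resulting error via a non-concentration estimate showing that the Galois orbits of the $\alpha_n$ do not cluster at $\beta$ in $\BP$. This requires ruling out an atom of $\mu_{f,v}$ at $\beta$ and quantifying the rate of equidistribution near $\beta$. For a single rational map of general form, handling these exceptional cases is precisely what is open, and is the heart of the conjecture; it has been resolved only for power maps, Chebyshev polynomials, and Lattès maps (Baker--Ih--Rumely, with uniformity by Yap), where the explicit structure of $\mu_{f,v}$ and the Julia set renders the non-concentration bound tractable. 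The present paper extends the method to semigroups generated by monomials, where the canonical measures remain sufficiently explicit for the scheme to go through in the stronger uniform form.
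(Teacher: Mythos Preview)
This statement is a \emph{conjecture}, not a theorem: the paper does not prove it and explicitly says it remains open for a general rational map. So there is no proof in the paper to compare your proposal against.

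That said, your write-up is an accurate sketch of the Baker--Ih--Rumely strategy and, crucially, you correctly locate the genuine obstruction: equidistribution only applies to continuous test functions, and justifying the exchange of limit and integral against the singular test function $-\log\|x,\beta\|_v$ requires a non-concentration estimate for Galois orbits of preperiodic points near $\beta$. For a general $f$ nobody knows how to do this, which is exactly why the conjecture is open. Your final paragraph is right that the known cases (power, Chebyshev, Latt\`es, and in this paper monomial semigroups) succeed because the equilibrium measures are explicit enough---Haar on a circle, or the Gauss point---to make the truncation error tractable, often via linear forms in logarithms. So your proposal is not a proof, but it is a correct diagnosis of the problem and of what the paper actually accomplishes.
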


Baker-Ih-Rumely \cite{BIR} have proved this conjecture in the cases where $f(z)=z^d$, $d \geq 2$ is a power map or $f$ is a Latt\'{e}s map associated to an elliptic curve $E/K$. Uniform results (as $\beta$ varies over number fields of bounded degree) in these cases were shown by Yap \cite{Yap}. Ih and Tucker \cite{IT} have also proved the case where $f$ is a Chebyshev map. For a general rational map $f$, Conjecture~\ref{conj:Ih} remains open, but was proved by Petsche \cite{P} under certain local conditions on the non-preperiodic point $\beta$. In this latter setting, the author has proved some effective bounds \cite{Young}.

Let $f_i : \P^1 \to \P^1$ for $i \in I$ be rational maps defined over $K$, and let $\cG := \langle f_i \rangle_{i \in I}$ denote the semigroup generated by the $f_i$ under composition. That is, $\cG$ consists of all maps of the form
\begin{equation}
\label{eq:fk}
f_{i_1 \cdots i_n} := f_{i_n} \circ \cdots \circ f_{i_1}, \quad n \geq 1, \: i_1,\ldots
,i_n \in I.
\end{equation}
By convention, we take the above expression with $n=0$ to denote the identity map. Given $\alpha \in \overline K$, write $\cO_\cG(\alpha) = \{ \alpha \} \cup \{ g(\alpha) : g \in \cG \}$ for the \emph{forward orbit} of $\alpha$ under $\cG$. When $\cG = \langle f \rangle$ is generated by a single map, $\cO_\cG(\alpha) = \cO_f(\alpha)$ is just the classical forward orbit of $\alpha$ under $f$. When $\cG = \langle f_1, f_2 \rangle$ is generated by two maps, $\cO_\cG(\alpha)$ can be seen as the binary tree represented below.

\begin{figure}[h]
\setlength{\unitlength}{1.25cm}
\vskip 2pt 
\begin{picture}(170,1.6)(9,1.2)
\put(15.1,2.9){$\alpha$}
\put(15.0,2.9){\vector(-4,-1){0.5}}
\put(15.4,2.9){\vector(4,-1){0.5}}
\put(13.5,2.5){$f_1(\alpha)$}  
\put(16.0,2.5){$f_2(\alpha)$}  
\put(13.6,2.3){\vector(-3,-1){0.5}}
\put(13.8,2.3){\vector(3,-1){0.5}}
\put(16.4,2.3){\vector(-3,-1){0.5}}
\put(16.6,2.3){\vector(3,-1){0.5}}
\put(12.0,1.7){$f_1\comp f_1(\alpha)$}  
\put(13.7,1.7){$f_2 \comp f_1(\alpha)$}  
\put(15.3,1.7){$f_1 \comp f_2(\alpha)$}  
\put(16.9,1.7){$f_2\comp f_2(\alpha)$}  
\put(12.8,1.5){\vector(3,-1){0.5}}
\put(12.6,1.5){\vector(-3,-1){0.5}}
\put(14.2,1.5){\vector(3,-1){0.5}}
\put(14,1.5){\vector(-3,-1){0.5}}
\put(15.8,1.5){\vector(-3,-1){0.5}}
\put(16.0,1.5){\vector(3,-1){0.5}}
\put(17.2,1.5){\vector(-3,-1){0.5}}
\put(17.4,1.5){\vector(3,-1){0.5}}
\put(11.9,1.2){\ldots}
\put(13.2,1.2){\ldots}
\put(14.8,1.2){\ldots}
\put(16.4,1.2){\ldots}
\put(17.6,1.2){\ldots}
\end{picture}
\end{figure}

The study of semigroup dynamical systems goes back to work of Silverman~\cite{Si93} who investigated the finiteness of $S$-integers in orbits generated by a finite set of rational functions, and more recently appears in several works addressing important questions in arithmetic dynamics  such as the Mordell-Lang Conjecture for endomorphisms of semiabelian varieties~\cite{GTZ1}, the orbit intersection problem~\cite{GN,GTZ}, see also~\cite{YZ}, the  irreducibility of compositions of polynomials~\cite{FGS,GMS,HBM}.

We are interested in formulating a semigroup version of Conjecture~\ref{conj:Ih}. To this end, we must define an appropriate notion of preperiodic points for a semigroup dynamical system. Several points of view can be adapted in this regard, as there are multiple ways to generalise the classical notion of preperiodicity under a single map. Firstly (see for example \cite[\textsection 1.3]{Kaw}), one can consider points $\alpha \in \overline K$ such that $\cO_\cG(\alpha)$ is finite. Let us call such points \emph{strongly preperiodic}. Then we have the following conjecture.

\begin{conjecture} \label{conj:StrongSemigroupIh}
Let $f_i : \P^1 \to \P^1$, $i \in I$ be rational maps of degree at least 2, defined over $K$, $\cG := \langle f_i \rangle_{i \in I}$, and let $\beta \in \P^1(\overline K)$ be a point which is not strongly preperiodic for $\cG$. Then there are only finitely many strongly preperiodic points $\alpha \in \P^1(\overline K)$ for $\cG$ which are $S$-integral relative to $\beta$.
\end{conjecture}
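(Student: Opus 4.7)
The plan is to reduce Conjecture~\ref{conj:StrongSemigroupIh} to the single-map Conjecture~\ref{conj:Ih} via a simple inclusion. If $\alpha \in \P^1(\ov K)$ is strongly preperiodic for $\cG$ then $\cO_\cG(\alpha)$ is finite, and since $\{f_i^n(\alpha) : n \geq 0\} \subseteq \cO_\cG(\alpha)$ for every $i \in I$, the point $\alpha$ is preperiodic for each individual generator $f_i$. Consequently
$$ \{\alpha \in \P^1(\ov K) : \alpha \text{ is strongly preperiodic for } \cG\} \;\subseteq\; \bigcap_{i \in I} \Preper(f_i). $$
If one can locate some index $i_0 \in I$ for which (a) $\beta \notin \Preper(f_{i_0})$ and (b) Conjecture~\ref{conj:Ih} is already known for $f_{i_0}$, then every strongly preperiodic $S$-integral point relative to $\beta$ lies in the finite set supplied by the single-map result, and the conclusion follows.

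For the monomial case, which is the focus of the paper, this plan succeeds immediately. Each $f_i(z) = z^{d_i}$ with $d_i \geq 2$ has preperiodic set equal to the set of roots of unity together with $\{0,\infty\}$, a set that is independent of $i$. Moreover, since a composition of monomials is again a monomial, every root of unity (and $0$, $\infty$) has finite $\cG$-orbit, so the above inclusion is in fact an equality. The hypothesis that $\beta$ is not strongly preperiodic for $\cG$ is thus equivalent to $\beta$ being neither a root of unity nor $0$ nor $\infty$; in particular $\beta \notin \Preper(f_i)$ for every $i$. Applying \cite{BIR} to any fixed generator $f_{i_0}$ then yields the required finiteness, and the uniformity in $\beta$ over number fields of bounded degree---the principal refinement sought in this paper---is obtained by substituting Yap's uniform version \cite{Yap} at this last step.

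The main obstacle to extending this reduction to Conjecture~\ref{conj:StrongSemigroupIh} in its full generality is the possibility that $\beta$ is preperiodic for every individual generator $f_i$ without being strongly preperiodic for $\cG$, equivalently that $\bigcap_i \Preper(f_i)$ is large and the generators interact in a non-trivial way. By known rigidity results an infinite common preperiodic set forces the $f_i$ to share a canonical height and measure of maximal entropy, and one would then need to develop a semigroup canonical height together with a joint adelic equidistribution theory in order to extend the arguments of~\cite{BIR,P} to this setting. Building such machinery, rather than carrying out the essentially elementary reduction above, is where the bulk of the difficulty lies beyond the monomial case.
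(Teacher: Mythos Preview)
This statement is a conjecture; the paper does not prove it. What the paper offers in its place are two observations: (i) the informal remark that strong preperiodicity for $\cG$ implies preperiodicity for every element of $\cG$, so Conjecture~\ref{conj:StrongSemigroupIh} collapses to the single-map Conjecture~\ref{conj:Ih}; and (ii) a short Proposition: if $\cG$ contains two elements with distinct Julia sets, then by Baker--DeMarco \cite{BD} the strongly preperiodic set is itself finite, and the conjecture holds vacuously.

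Your first paragraph is exactly observation (i), and you are in fact more careful than the paper about the residual gap: the hypothesis that $\beta$ is not strongly preperiodic for $\cG$ does not immediately furnish an index $i_0$ with $\beta \notin \Preper(f_{i_0})$. You also invoke the same Baker--DeMarco rigidity in your last paragraph, though in the contrapositive direction: you use it to isolate the hard case (all generators share a Julia set), whereas the paper packages it as a Proposition disposing of the complementary easy case. So your discussion and the paper's are essentially the same reduction, organised differently.

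Your second paragraph, however, mislocates the paper's focus in two ways. First, the paper treats monomials $f_i(z)=a_i z^{d_i}$ with arbitrary $a_i \in K \setminus \{0\}$, not just pure powers $z^{d_i}$; for general $a_i$ the sets $\Preper(f_i)$ are not all equal to the roots of unity together with $\{0,\infty\}$, and your clean argument does not go through as stated. Second, and more importantly, the paper's main Theorem~\ref{thm:main} addresses Conjecture~\ref{conj:SemigroupIh}, the \emph{weaker} notion of preperiodicity, for which the inclusion into $\Preper(f_{i_0})$ fails altogether: a point can be preperiodic for $\cG$ in that sense without being preperiodic for any single generator. The machinery you anticipate in your final paragraph (sequence canonical heights, adelic equidistribution uniform over the semigroup) is precisely what the paper builds in Sections~\ref{sec:QuantEquid}--\ref{sec:DistBds}, but it is deployed for Conjecture~\ref{conj:SemigroupIh}, not for the strongly preperiodic variant, which the paper explicitly regards as no harder than Ih's original conjecture.
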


The problem with this definition of preperiodic in our context, is that it is stronger than being preperiodic for each individual element of the semigroup, and hence Conjecture~\ref{conj:StrongSemigroupIh} is equivalent to Conjecture~\ref{conj:Ih}. It can in fact be quickly resolved when the semigroup $\cG$ contains any dynamically unrelated elements.

\begin{prop}
If $\cG = \langle f_i \rangle_{i \in I}$ has two elements $f$ and $g$ with distinct Julia sets (when viewed as complex dynamical systems), then $\cG$ has only finitely many strongly preperiodic points, and so Conjecture~\ref{conj:StrongSemigroupIh} holds vacuously.
\end{prop}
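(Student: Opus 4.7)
The plan is to reduce the statement to a known rigidity theorem in arithmetic/complex dynamics. First I would observe that if $\alpha$ is strongly preperiodic for $\cG$, then by definition the forward orbit $\cO_\cG(\alpha)$ is finite. In particular, since every iterate $f^n(\alpha)$ and $g^n(\alpha)$ lies in $\cO_\cG(\alpha)$, the classical orbits $\cO_f(\alpha)$ and $\cO_g(\alpha)$ are both finite. Hence $\alpha$ is preperiodic for $f$ and for $g$ individually, and the set of strongly preperiodic points of $\cG$ is contained in the intersection $\Preper(f)\cap\Preper(g)$.

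The key step is then to invoke the theorem of Baker--DeMarco (\emph{Special curves and postcritically finite polynomials}, or the rational map version), which asserts that for two rational maps $f,g:\P^1\to\P^1$ of degree at least $2$, the intersection $\Preper(f)\cap\Preper(g)$ is either finite or equal to $\Preper(f)=\Preper(g)$. In the latter case the measures of maximal entropy coincide, $\mu_f=\mu_g$, and since $J(f)=\operatorname{supp}(\mu_f)$, one concludes $J(f)=J(g)$. Taking the contrapositive, the hypothesis that $f$ and $g$ have distinct Julia sets forces $\Preper(f)\cap\Preper(g)$ to be finite, and therefore $\cG$ has only finitely many strongly preperiodic points. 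Conjecture~\ref{conj:StrongSemigroupIh} then holds trivially, as the set of candidates for $S$-integrality relative to $\beta$ is already finite.

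The only real content of the argument is the appeal to Baker--DeMarco, which is the expected obstacle: the equivalence ``infinite common preperiodic set $\Longleftrightarrow$ equal canonical measure'' is a deep theorem resting on equidistribution of points of small height and a potential-theoretic rigidity statement. Once this is imported as a black box, the semigroup reduction in the first paragraph is essentially tautological, which is why the proposition is presented as a short observation rather than a genuinely new result.
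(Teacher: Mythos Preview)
Your argument is correct and is essentially identical to the paper's: both reduce to the Baker--DeMarco rigidity theorem by noting that a strongly preperiodic point for $\cG$ is preperiodic for each of $f$ and $g$, and then cite the result that distinct Julia sets force $\Preper(f)\cap\Preper(g)$ to be finite. The only cosmetic difference is that the paper cites \cite[Corollary~1.3]{BD} (the 2011 Duke paper \emph{Preperiodic points and unlikely intersections}) directly, whereas you spell out the intermediate step through the equality of canonical measures; the paper's reference is the more accurate one for the rational-map statement you need.
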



\begin{proof}
If $\cG$ has elements $f$ and $g$ with distinct Julia sets, then by \cite[Corollary~1.3]{BD}, they have only finitely many preperiodic points in common, and hence $\cG$ has only finitely many strongly preperiodic points.
\end{proof}

We say that a point $\alpha \in \overline K$ is \emph{preperiodic} for $\cG$, if
$f_{i_1 \cdots i_n}(\alpha) = f_{i_1 \cdots i_m}(\alpha)$ for some $n > m \geq 0$ and $i_1,\ldots,i_n \in I$. It is clear from this definition that a semigroup $\cG$ can have preperiodic points which are not preperiodic for any of its generators, and so this context should hopefully lead to a more interesting generalisation of Conjecture~\ref{conj:Ih}. Note that there is yet another, even weaker notion of preperiodicity (see for example \cite[(1.7)]{OY}) which we will not discuss here. We conjecture the following integrality result for finitely generated semigroups of rational functions.

\begin{conjecture}
\label{conj:SemigroupIh}
Let $f_1,\ldots,f_s : \P^1 \to \P^1$ be rational maps of degree at least 2, defined over $K$, $\cG := \langle f_1,\ldots,f_s \rangle$, and let $\beta \in \P^1(\overline K)$ be a point which is not preperiodic for $\cG$. Then there are only finitely many preperiodic points $\alpha \in \P^1(\overline K)$ for $\cG$ which are $S$ integral relative to $\beta$.
\end{conjecture}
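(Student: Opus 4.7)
The plan is to mimic the Baker-Ih-Rumely strategy that settles the single-map Conjecture~\ref{conj:Ih} for power maps and Latt\'{e}s maps, and which underpins Petsche's conditional theorem~\cite{P}. In that framework one argues by contradiction: assuming infinitely many preperiodic $S$-integral points $\alpha_k$, their Galois orbits on the Berkovich projective line $\BP$ equidistribute at each place $v$ against the equilibrium measure $\mu_{f,v}$ of the dynamical system; the $S$-integrality at $v \notin S$ forces the $\alpha_k$ to stay $v$-adically far from $\beta$, so an Arakelov/product-formula computation yields $\hat h_f(\beta) = 0$, contradicting the non-preperiodicity of $\beta$. The goal is to execute this scheme with $f$ replaced by the semigroup $\cG$.

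The central technical requirement is a canonical measure $\mu_{\cG,v}$ on $\BP$ at each place together with a canonical height $\hat h_\cG$ on $\P^1(\ov K)$ such that: $(\mathrm{i})$ the set $\{\hat h_\cG = 0\}$ coincides with $\Preper(\cG)$; $(\mathrm{ii})$ Galois orbits of points of small $\hat h_\cG$ equidistribute against $\mu_{\cG,v}$; and $(\mathrm{iii})$ $\hat h_\cG$ decomposes into local heights coupling to $\mu_{\cG,v}$ in the standard Arakelov manner. When all $f_i$ have a common degree $d$ one may try
\begin{equation*}
\mu_{\cG,v} = \lim_{n \to \infty} \frac{1}{s^n}\sum_{i_1, \ldots, i_n} \mu_{f_{i_1 \cdots i_n},v}, \qquad \hat h_\cG(\alpha) = \lim_{n \to \infty} \frac{1}{s^n d^n}\sum_{i_1,\ldots,i_n} h(f_{i_1 \cdots i_n}(\alpha)),
\end{equation*}
and for varying degrees one weights the sums appropriately. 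Given these ingredients, a hypothetical infinite family of $\cG$-preperiodic $S$-integral points would, via Yuan-Chambert-Loir equidistribution combined with the $S$-integrality estimate at places outside $S$, force
\begin{equation*}
\sum_v \int \log|z-\beta|_v \, d\mu_{\cG,v}(z) \geq 0,
\end{equation*}
which by $(\mathrm{iii})$ equals $-\hat h_\cG(\beta)$; the non-preperiodicity of $\beta$ gives $\hat h_\cG(\beta) > 0$, a contradiction.

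The principal obstruction, and the reason the present paper treats only the monomial case, lies in establishing $(\mathrm{i})$. A $\cG$-preperiodic point need not be preperiodic for any individual $f_i$, and its forward orbit under arbitrary compositions can have Weil heights growing without bound, so the averaged height above may well be strictly positive on parts of $\Preper(\cG)$: the candidate characterisation of preperiodicity via $\hat h_\cG = 0$ must be refined, perhaps by replacing the averaged height with a minimum over compositions, but then equidistribution and the Arakelov decomposition $(\mathrm{ii})$--$(\mathrm{iii})$ become unclear. For semigroups with a common invariant measure --- classified by Baker-DeMarco~\cite{BD} --- the individual $\mu_{f_i,v}$ coincide and the difficulty collapses; for monomial semigroups, all relevant measures reduce to the Haar measure on the unit circle at archimedean $v$ and to the Dirac mass at the Gauss point otherwise, and $\hat h_\cG$ to the ordinary Weil height. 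Outside such symmetric settings, obtaining the correct height-measure pair for $\cG$ appears to require either a new random-dynamics or stationary-measure theory on $\BP$, or an entirely Diophantine substitute that bypasses equidistribution. I expect this is the step at which a full proof of Conjecture~\ref{conj:SemigroupIh} currently stalls.
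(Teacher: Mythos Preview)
The statement is a \emph{conjecture}; the paper does not prove it, so there is no ``paper's own proof'' to compare against. Your proposal is accordingly not a proof but a strategy sketch that already concedes the central obstruction, so as a proof attempt it has a self-identified gap at step~(i).

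That said, your diagnosis of where the averaging approach breaks down is reasonable, but it is worth pointing out that the paper's proof of the monomial special case (Theorem~\ref{thm:main}) does \emph{not} follow the averaging route you propose. Rather than building a single semigroup height $\hat h_\cG$ and measure $\mu_{\cG,v}$, the paper associates to each preperiodic $\alpha$ a specific \emph{sequence} $\bm{f}$ obtained from $\cG$ for which $\alpha$ is $\bm{f}$-preperiodic, and then works with Kawaguchi's sequence canonical height $\hat h_{\bm{f}}$ and the sequence equilibrium measures $\rho_{\bm{f},v}$ constructed in Proposition~\ref{prop:seqMeas}. The point is that these depend on $\alpha$, but the key quantitative inputs --- the H\"older constants of the potentials (Proposition~\ref{prop:seqMeas}(c)), the lower bound $\hat h_{\bm{f}}(\beta) \geq \coSHLB$ of Lemma~\ref{lem:LowBdSeqCanHeight}, and the linear-forms-in-logarithms estimate of Theorem~\ref{thm:DistBd} --- are \emph{uniform} over all sequences obtained from $\cG$. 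This sidesteps your obstruction~(i) entirely: one never needs $\hat h_\cG(\alpha)=0$, only $\hat h_{\bm{f}}(\alpha)=0$ for the particular $\bm{f}$ adapted to $\alpha$. If a general attack on Conjecture~\ref{conj:SemigroupIh} is to succeed along these lines, it seems more promising to pursue this ``per-sequence with uniform constants'' philosophy than the averaged construction.

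Finally, a small inaccuracy: in the monomial case with nontrivial coefficients $a_i \neq 1$, the archimedean equilibrium measure $\rho_{\bm{f},v}$ is Haar measure on a circle of radius $|\alpha|_v$ determined by the sequence (see the computation around~\eqref{eq:Jensen}), not always the unit circle, and $\hat h_{\bm{f}}$ is not the ordinary Weil height. The simplification you describe holds only for the pure power-map semigroup $\langle z^{d_1},\ldots,z^{d_s}\rangle$.
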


The main result of this paper is the special case of Conjecture~\ref{conj:SemigroupIh} in which the semigroup $\cG$ is generated by monomials. This generalises \cite[Theorem~2.1]{BIR} (the case of a single power map), and is uniform in the sense of \cite[Theorem~1.2]{Yap}.

\begin{theorem}
\label{thm:main}
Let $\cG=\langle f_1,\ldots,f_s \rangle$, where each $f_i$ is a monomial of the form $f_i(z) = a_i z^{d_i}$, $a_i \in K \setminus \{0 \}$, $d_i \in \Z$ with $|d_i| \geq 2$, and let $D \geq 1$. Then there exists an effectively computable constant $C>0$, depending only on $\cG$, $S$ and $[K:\Q]$, such that for any $\beta \in \overline K$ with $[K(\beta):K] \leq D$ which is not preperiodic for $\cG$, there are at most $C$ preperiodic points for $\cG$ in $\overline K$ which are $S$-integral relative to $\beta$.
\end{theorem}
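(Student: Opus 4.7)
The plan is to combine a structural analysis of $\Preper(\cG)$, made possible by the monomial form of the generators, with the adelic equidistribution / potential-theoretic machinery developed for a single power map in \cite{BIR} and made uniform in $[K(\beta):K]$ by \cite{Yap}.

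First, I would pin down the shape of the preperiodic points. A simple induction on composition length gives $f_{i_1\cdots i_n}(z)=c_n\,z^{D_n}$, where $D_n:=d_{i_1}\cdots d_{i_n}$ and $c_n:=\prod_{j=1}^n a_{i_j}^{\,d_{i_{j+1}}\cdots d_{i_n}}$ is an explicit monomial in $a_1,\dots,a_s$. The preperiodicity condition $f_{i_1\cdots i_n}(\alpha)=f_{i_1\cdots i_m}(\alpha)$ with $n>m\ge 0$ then amounts to $\alpha^{D_n-D_m}=c_m/c_n$, with $D_n-D_m$ a nonzero integer by $|d_i|\ge 2$ and $c_m/c_n$ in the finitely generated group $\Gamma:=\langle a_1,\dots,a_s\rangle\subset\overline K^*$. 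Hence every non-zero, non-infinite preperiodic point lies in
\[
E:=\{\alpha\in\overline K^*:\alpha^N\in\Gamma\text{ for some }N\ge 1\}=\mu_\infty\cdot\Gamma_{\mathrm{div}},
\]
where $\mu_\infty$ is the group of all roots of unity and $\Gamma_{\mathrm{div}}$ the divisible hull of $\Gamma$. A telescoping bound on $h(c_n)$ using $|d_i|\ge 2$ gives a uniform Weil-height bound $h(\alpha)\le C_1(\cG)$ on every $\alpha\in\Preper(\cG)$.

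Next, I would set up an equidistribution argument at each place $v$ of $K$. Assuming for contradiction that infinitely many $\alpha\in\Preper(\cG)$ are $S$-integral relative to some fixed $\beta$ with $[K(\beta):K]\le D$, the uniform height bound together with Northcott's theorem let me extract a subsequence $(\alpha_n)$ with $[K(\alpha_n):K]\to\infty$. Their Galois orbits should equidistribute on the $v$-adic Berkovich projective line against a canonical probability measure $\mu_{v,\cG}$ supported on the closure of $E$; for monomial dynamics, $\mu_{v,\cG}$ can be identified explicitly as a weighted combination of (Berkovich) Haar measures on the circles of radii $\{|\gamma|_v^{1/N}:\gamma\in\Gamma,\,N\ge 1\}$. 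The equidistribution is a consequence of a Favre–Rivera-Letelier / Baker–Rumely-type theorem applied to a canonical height that vanishes on $E$.

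Finally, I would derive the contradiction via an adelic potential estimate. The strict form of $S$-integrality gives $\log[\sigma\alpha_n,\tau\beta]_v^{-1}=0$ at every $v\notin S$ and all Galois conjugates; after Galois averaging and passing to the limit $n\to\infty$ via equidistribution, this yields $\int\log[z,\tau\beta]_v^{-1}\,d\mu_{v,\cG}(z)=0$ for every $v\notin S$. Combined with the adelic identity $\sum_v\log[\alpha_n,\beta]_v^{-1}=h(\alpha_n)+h(\beta)+O(1)$ (whose right-hand side is bounded since $h(\alpha_n)$ is), one obtains an estimate on $\sum_{v\in S}\int\log[z,\beta]_v^{-1}\,d\mu_{v,\cG}(z)$. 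A potential-theoretic computation, using that $\beta$ is not preperiodic (so $\beta\notin E$), then contradicts this estimate. Uniformity in $D=[K(\beta):K]$ is obtained by keeping every estimate effective in the degree of $\beta$, following \cite{Yap}. The main technical obstacle is the precise identification of $\mu_{v,\cG}$ attached to $E$ at each place — the essential new feature relative to \cite{BIR} — and carrying out the potential-theoretic step with the required uniformity in $D$.
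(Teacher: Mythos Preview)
Your outline has the right high-level architecture (product formula versus equidistribution at the bad places), but two of the load-bearing steps are genuinely missing, and one of them reflects a conceptual misconception.

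\textbf{There is no single measure $\mu_{v,\cG}$.} Your proposed ``weighted combination of Haar measures on the circles of radii $\{|\gamma|_v^{1/N}\}$'' is not a well-defined probability measure: as soon as some $|a_i|_v\ne 1$ at an archimedean place, that set of radii is dense in an interval, and the Galois orbits of preperiodic points do \emph{not} equidistribute to a common limit (different subsequences land on different circles). There is also no canonical height on $\P^1$ vanishing exactly on $\Preper(\cG)$ and coming from an adelic measure. The paper works around this by observing that each preperiodic $\alpha$ is preperiodic for some specific sequence $\bm f$ obtained from $\cG$, and then uses the sequence height $\hat h_{\bm f}$ and measure $\rho_{\bm f,v}$ (which at an archimedean $v$ is Haar on the single circle $|z|_v=|\alpha|_v$). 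The substantial new point, Proposition~\ref{prop:seqMeas}, is that the H\"older constants of these measures are uniform over all sequences obtained from $\cG$, so the quantitative Favre--Rivera-Letelier bound applies with a constant depending only on $\cG$. Your argument collapses without this per-sequence setup.

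\textbf{Linear forms in logarithms are essential and absent.} In the equidistribution step you cannot simply pass to the limit in $\sum_\sigma\log|\sigma(\alpha)-\beta|_v$; conjugates that are very close to $\beta$ can dominate. In \cite{BIR} this is handled with Baker's theorem at the archimedean places only, but here the paper needs both archimedean and $p$-adic lower bounds on linear forms in logarithms (Theorem~\ref{thm:LinearFormsInLogs}) to prove Theorem~\ref{thm:DistBd}: $\min_\sigma\log|\sigma(\alpha)-\beta|_v>-C(h(\beta)+1)\log[K(\alpha):K]$. For this bound to be non-vacuous one must also show $\log[K(\alpha):K]\gg\log(MQ)$ when $\alpha=\zeta_Q\gamma$ with $\gamma^M\in K$; this is Proposition~\ref{prop:PreperStruct}, which rests on Schinzel's results on the factorisation of $X^N-a$ (Capelli's theorem alone is insufficient because the obvious binomial factors can still be reducible). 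Neither ingredient appears in your sketch, and both are exactly the ``novel obstacles'' the paper flags in the introduction.
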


Theorem~\ref{thm:main}, together with the results of \cite{Young} (and the extension of some of these to the semigroup setting) form a large part of the author's PhD thesis \cite{YThesis}.

The proof of Theorem~\ref{thm:main} follows in the same vein as \cite[Theorem~2.1]{BIR}, with the uniformity obtained in a similar manner to \cite{Yap}. However, there are a number of novel obstacles and technical subtleties. Firstly, it is necessary to establish a quantitative equidistribution result for points of small height for dynamical sequences obtained from the given semigroup, which is uniform in the sense that it depends only on the semigroup, and not the choice of sequence. The semigroup case also requires more general application of lower bounds for linear forms in logarithms, including in the non-archimedean setting (which could be avoided in the case of a single map). Moreover, in order for our bounds coming from linear forms in logarithms to not be vacuous, we require lower bounds on the degree of a preperiodic point in terms of its (pre)period. This is easily achieved in the case of a single power map, since in $\zeta_n$ is a primitive $n$-th root of unity, then $[\Q(\zeta_n):\Q] = \varphi(N) \ll N^{1/2}$, where $\varphi$ is the Euler totient function. However, the theory of general binomial factorisation is more complicated than one would think, and hence we need some additional arguments in our setting.

Note also that Conjecture~\ref{conj:SemigroupIh} holds for semigroups generated by (even infinitely many) Chebyshev maps, but that this is immediate from the case of a single Chebyshev map, proved in \cite{IT} (see \cite[Theorem~1.14]{YThesis}).

The paper is structured as follows. In Section~\ref{sec:prelim}, we give some background on canonical heights associated to sequences of rational functions, lower bounds on linear forms in logarithms, adelic measures and potential theory on the Berkovich projective line. In Section~\ref{sec:QuantEquid} we establish some properties of adelic measures associated to sequences of rational maps obtained from a rational semigroup, which imply the aforementioned uniform quantitative equidistribution of points of small height for such sequences. In section~\ref{sec:preperStruct}, we use results on the factorisation of lacunary polynomials to prove a useful characterisation of preperiodic points for monomial semigroups. We use this characterisation together with bounds on linear forms in logarithms to give a lower bound on the distance from a non-preperiodic point to a preperiodic point of such semigroups in Section~\ref{sec:DistBds}. We then prove Theorem~\ref{thm:main} in Section~\ref{sec:ProofMain}.

\subsection*{Acknowledgements} The author is very grateful to Robert Benedetto and Holly Krieger for useful comments on the paper.

\section{Preliminaries} \label{sec:prelim}

Let $K$ be a number field, and let $M_K$ denote the set of all places of $K$. For each $v \in M_K$, let $| \cdot |_v$ denote absolute value normalised as follows. If $v$ is archimedean then
$$
|x|_v = \begin{cases} |x|^{\frac{1}{[K:\Q]}}, & K_v \cong \R, \\ |x|^{\frac{2}{[K:\Q]}}, & K_v \cong \C. \end{cases}
$$
If $v$ is non-archimedean and corresponds to a prime ideal $\p$ of the ring of integers $\cO_K$ of $K$, then $|x|_v = (N_K \p)^{-\frac{\ord_\p(x)}{[K:\Q]}}$, where $N_K \p = |\cO_K/\p|$ is the norm of $\p$ and $\ord_\p(x)$ denotes the exponent of $\p$ in the prime ideal decomposition of $x$, with $\ord_\p(0)=\infty$. 

Then we have the \emph{product formula}
$$
\prod_{v \in M_K} |x|_v = 1
$$
for $0 \neq x \in K$. There is a unique extension of $| \cdot |_v$ to $\overline K_v$, also denoted $| \cdot |_v$. Given a finite extension $L/K$, some $\beta \in L$ and a place $v$ of $K$, we have the \emph{extension formula}
\begin{equation} \label{eq:ExtFormula}
\sum_{\sigma : L/K \hookrightarrow \overline K_v} \log |\sigma(\beta)|_v = \sum_{w \mid v} \log |\beta|_w.
\end{equation}

\subsection{Reduction of rational maps} \label{subsec:Reduction}

Given a non-archimedean place $v$ of $K$, denote the ring of integers and residue field of $K$ with respect to $v$ by $\cO_v$ and $k_v$ respectively. Given a rational function $\rat: \P^1 \to \P^1$ of degree $d$ defined over $K$, represented as
$$
\rat([x,y]) = [g(x,y),h(x,y)],
$$
where $g,h \in \cO_v[x,y]$ are homogeneous polynomials of degree $d$ with no common irreducible factors in $K[x,y]$ and at least one coefficient of $g$ or $h$ has $v$-adic absolute value 1, we say that $\rat$ has \emph{good reduction} at $v$ if the reductions of $f$ and $g$ modulo $v$ are relatively prime in $k_v[x,y]$. Otherwise we say that $\rat$ has \emph{bad reduction} at $v$. Note that if $f_1,f_2: \P_1 \to \P^1$ are rational functions defined over $K$, each with good reduction, then $f_1 \circ f_2$ also has good reduction \cite[Proposition~4.8]{B}.

\subsection{Heights of algebraic numbers} \label{subsec:Heights} Recall that the \emph{(absolute logarithmic) height} of an algebraic number $\alpha$ is given by
$$
h(\alpha) = \sum_{v \in M_K} \log^+ |\alpha|_v,
$$
where $K$ is any number field containing $\alpha$, and $\log^+x = \log \max \{ 1, x \}$. The height is independent of the choice of number field, invariant under Galois conjugation, and by the famous theorem of Northcott, there are only finitely algebraic numbers of bounded degree and height. We have, for any algebraic numbers $\alpha_1, \ldots, \alpha_r$
\begin{equation} \label{eq:heightIneq}
h(\alpha_1 \cdots \alpha_r) \leq h(\alpha_1) + \cdots + h(\alpha_r).
\end{equation}
Moreover, for any algebraic number $\alpha \neq 0$ contained in a number field $K$, and any $\lambda \in \Q$,
\begin{equation} \label{eq:heightPower}
h(\alpha^\lambda) = |\lambda| h(\alpha),
\end{equation}
and for any subset $U$ of $M_K$
\begin{equation} \label{eq:fundIneq}
-h(\alpha) \leq \sum_{v \in U} \log |\alpha|_v \leq h(\alpha)
\end{equation}
(see for example \cite[\textsection \textsection 1.5]{BG}).


Let $\bm{f} = (f_i)_{i=1}^\infty$ be a sequence of rational maps $\P^1 \to \P^1$ defined over $K$, of respective degrees $d_i \geq 2$. Then $c(f_i) := \sup_{x \in \P^1(\overline K)} \left| \frac{1}{d_i} h(f_i(x))-h(x) \right| < \infty$. We say that a sequence $\bm{f}$ is \emph{bounded} if $c(\bm{f}) := \sup_{i \geq 1} c(f_i) < \infty$. Let $T$ be the shift map which sends $\bm{f} = (f_i)_{i=1}^\infty$ to $T(\bm{f}) := (f_{i+1})_{i=1}^\infty$. Kawaguchi \cite[Theroem~A]{K} constructed a canonical height function associated to a bounded sequence.

\begin{prop}
\label{prop:SeqCanHeight}
There is a unique way to attach to each bounded sequence $\bm{f} = (f_i)_{i=1}^\infty$ a height function $\hat  h_{\bm{f}} : \P^1(\overline K) \to \R$ such that
\begin{enumerate}
\item[(a)] $\sup_{x \in \P^1(\overline K)} \left| \hat h_{\bm{f}}(x) - h(x) \right| \leq 2c(\bm{f})$;
\item[(b)] $\hat h_{T(\bm{f})} \circ f_1 = d_1 \hat h_{\bm{f}}$.
\end{enumerate}
Moreover, $\hat h_{\bm{f}}$ is non-negative, and $\hat h_{\bm{f}}(x)=0$ if and only if $x$ is $\bm{f}$-preperiodic, in the sense that the forward orbit $O_{\bm{f}}^+(x) := \{ x,f_1(x),f_2(f_1(x)),\ldots \}$ is finite.

We call $\hat h_{\bm{f}}$ a \emph{canonical height function} for $\bm{f}$.
\end{prop}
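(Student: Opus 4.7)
The plan is to adapt the classical Tate limit construction, modified to handle a varying composition rather than iterates of a single map. Setting $F_n := f_n \circ \cdots \circ f_1$ (with $F_0 = \mathrm{id}$) and $D_n := d_1 \cdots d_n$, I would define
$$
\hat h_{\bm{f}}(x) := \lim_{n \to \infty} \frac{1}{D_n} h(F_n(x))
$$
and verify (a), (b), uniqueness, and the characterisation of zeros in turn.

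For existence of the limit, the definition of $c(f_i)$ rearranges to $|h(f_{i+1}(y)) - d_{i+1} h(y)| \leq d_{i+1} c(\bm{f})$. Applying this with $y = F_n(x)$ and dividing by $D_{n+1}$ gives
$$
\left| \frac{h(F_{n+1}(x))}{D_{n+1}} - \frac{h(F_n(x))}{D_n} \right| \leq \frac{c(\bm{f})}{D_n} \leq \frac{c(\bm{f})}{2^n},
$$
so the sequence is Cauchy. Telescoping from $n=0$ and summing $\sum_{n \geq 0} 2^{-n} = 2$ yields $|\hat h_{\bm{f}}(x) - h(x)| \leq 2 c(\bm{f})$, which is (a).

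Property (b) follows by bookkeeping: one has $F_{n+1}^{\bm{f}}(x) = F_n^{T(\bm{f})}(f_1(x))$ and $D_{n+1}^{\bm{f}} = d_1 \cdot D_n^{T(\bm{f})}$, so passing to the limit relates $\hat h_{T(\bm{f})} \circ f_1$ to $d_1 \hat h_{\bm{f}}$. For uniqueness, suppose $g_{\bm{f}}$ is any other assignment satisfying (a) and (b). Then $\psi_{\bm{f}} := g_{\bm{f}} - \hat h_{\bm{f}}$ is uniformly bounded (by $4 c(\bm{f})$) and still satisfies $\psi_{T(\bm{f})} \circ f_1 = d_1 \psi_{\bm{f}}$. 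Iterating this along the shift orbit gives $\psi_{T^n(\bm{f})}(F_n(x)) = D_n \psi_{\bm{f}}(x)$; since $c(T^n(\bm{f})) \leq c(\bm{f})$, the left side remains bounded while $D_n \to \infty$, forcing $\psi_{\bm{f}} \equiv 0$.

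Non-negativity is immediate from the limit formula and $h \geq 0$. If $x$ is $\bm{f}$-preperiodic then the orbit $O^+_{\bm{f}}(x)$ is finite, so $h(F_n(x))$ is bounded and the limit vanishes. For the converse, I would apply (a) to the shifted sequence $T^n(\bm{f})$ at the point $F_n(x)$, noting that iterating (b) gives $\hat h_{T^n(\bm{f})}(F_n(x)) = D_n \hat h_{\bm{f}}(x) = 0$; this produces $h(F_n(x)) \leq 2 c(\bm{f})$ uniformly in $n$, so since the orbit lies in the single number field $K(x)$, Northcott's theorem forces it to be finite. The only conceptual subtlety beyond the single-map Tate argument is that (b) couples the heights of two different sequences, $\bm{f}$ and $T(\bm{f})$, so both existence and uniqueness have to be carried out for the whole assignment $\bm{f} \mapsto \hat h_{\bm{f}}$ simultaneously, with the uniqueness argument iterating along the shift orbit rather than iterating a single self-map — this is the mildly delicate point to get right.
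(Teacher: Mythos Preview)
Your proof is correct and follows the standard Tate telescoping argument adapted to sequences, which is exactly the approach of Kawaguchi's original paper. The paper itself does not prove this proposition; it simply states the result and cites \cite[Theorem~A]{K}, so your write-up in fact supplies more detail than the paper does.
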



\defconstant{SHLB}

Similarly, we say that a rational semigroup $\cG = \langle f_i \rangle_{i \in I}$ generated by functions of degree $d_i \geq 2$ defined over $K$, is \emph{bounded} if $c(\cG) := \sup_{i \geq I} c(f_i) < \infty$. We say that a sequence $\bm{f} = (f_{i_k})_{k=1}^\infty$ is \emph{obtained from $\cG$} if $i_k \in I$ for all $k \geq 1$. Note the following
\begin{lemma} \label{lem:LowBdSeqCanHeight}
Let $\cG = \langle f_i \rangle_{i \in I}$ be a bounded rational semigroup defined over $K$ with $d := \sup_{i \in I} \deg f_i < \infty$, and suppose $\beta \in \overline K$ is not preperiodic for $\cG$. Then there is a constant $\coSHLB > 0$, depending only on $\cG$, $K$ and $[K(\beta):K]$ such that
$$
\hat h_{\bm{f}}(\beta) \geq \coSHLB
$$
for any sequence $\bm{f}$ obtained from $\cG$.
\end{lemma}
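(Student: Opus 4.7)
The plan is to combine the two defining properties of the canonical height from Proposition~\ref{prop:SeqCanHeight} with Northcott's theorem, via a pigeonhole argument on the forward orbit of $\beta$ along the sequence $\bm{f}$. Fix any sequence $\bm{f} = (f_{i_k})_{k \geq 1}$ obtained from $\cG$, and write $g_n := f_{i_n} \circ \cdots \circ f_{i_1}$ with $g_0 := \mathrm{id}$. Each $g_n$ is defined over $K$, so $g_n(\beta) \in K(\beta)$ has degree over $K$ at most $[K(\beta):K]$, and the non-preperiodicity of $\beta$ for $\cG$ forces $g_n(\beta) \neq g_m(\beta)$ for all $n > m \geq 0$ (otherwise $\beta$ would be preperiodic for the word $(i_1,\ldots,i_n)$, contradicting the hypothesis). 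In particular, the orbit points $\{g_n(\beta)\}_{n \geq 0}$ are pairwise distinct.

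By Northcott's theorem, the set
$$
\{\gamma \in \overline{K} : h(\gamma) \leq 2c(\cG) + 1, \ [K(\gamma):K] \leq [K(\beta):K]\}
$$
is finite, of cardinality at most some $N = N(K, [K(\beta):K], c(\cG))$. Applying the pigeonhole principle to the $N+1$ distinct points $g_0(\beta), \ldots, g_N(\beta)$, there exists $n_0 \in \{0, 1, \ldots, N\}$ with $h(g_{n_0}(\beta)) > 2c(\cG) + 1$.

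On the other hand, iterating property (b) of Proposition~\ref{prop:SeqCanHeight} gives $\hat h_{T^{n_0}(\bm{f})}(g_{n_0}(\beta)) = d_{i_1} \cdots d_{i_{n_0}} \hat h_{\bm{f}}(\beta) \leq d^{n_0} \hat h_{\bm{f}}(\beta)$, while property (a) applied to the shifted sequence $T^{n_0}(\bm{f})$, together with $c(T^{n_0}(\bm{f})) \leq c(\cG)$, yields
$$
h(g_{n_0}(\beta)) \leq \hat h_{T^{n_0}(\bm{f})}(g_{n_0}(\beta)) + 2c(\cG) \leq d^{n_0} \hat h_{\bm{f}}(\beta) + 2c(\cG).
$$
Combining this with the lower bound $h(g_{n_0}(\beta)) > 2c(\cG) + 1$ and rearranging, $\hat h_{\bm{f}}(\beta) > d^{-n_0} \geq d^{-N}$, so one may take $\coSHLB := d^{-N}$.

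There is no serious obstacle; the only point requiring care is bookkeeping that the final constant depends only on the data permitted by the statement. Here $N$ depends only on $K$, $[K(\beta):K]$ and $c(\cG)$, the latter of which is determined by $\cG$; similarly, $d = \sup_{i \in I} d_i < \infty$ depends only on $\cG$. In particular, the bound is independent of the choice of sequence $\bm{f}$, as required.
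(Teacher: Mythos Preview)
Your proof is correct and follows essentially the same route as the paper's: both derive the inequality $h(g_n(\beta)) \leq d^n \hat h_{\bm{f}}(\beta) + 2c(\cG)$ from Proposition~\ref{prop:SeqCanHeight} and then invoke Northcott on the distinct orbit points $g_n(\beta) \in K(\beta)$. The only difference is cosmetic: the paper phrases the conclusion as a contradiction (``if $\varepsilon$ can be chosen arbitrarily small, this produces arbitrarily many points of bounded degree and height''), whereas you run the same idea forward via pigeonhole to extract the explicit constant $\coSHLB = d^{-N}$.
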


\begin{proof}
If $\hat h_{\bm{f}}(\beta) = \varepsilon$ for a sequence $\bm{f}=(f_{i_k})_{k=1}^\infty$ obtained from $\cG$, then by Proposition~\ref{prop:SeqCanHeight}
\begin{align*}
h(f_{i_n} \circ \cdots \circ f_{i_1}(\beta)) & \leq \hat h_{T^n(\bm{f})}(f_{i_n} \circ \cdots \circ f_{i_1}(\beta)) + 2c(\cG) \\
& = d_{i_1} \cdots d_{i_n} \hat h_{\bm{f}}(\beta) + 2c(\cG) \leq d^n \varepsilon + 2c(\cG)
\end{align*}
for any $n \geq 1$. In particular, if $\varepsilon$ can be chosen arbitrarily small, this produces arbitrarily many points of bounded degree and height, a contradiction.
\end{proof}

\subsection{Lower bounds on linear forms in logarithms} \label{subsec:linearforms}

Bounds on linear forms in logarithms are crucial to our proof of Theorem~\ref{thm:main}. These were first developed by Baker (see for example \cite[Chapters~2~and~3]{Ba}), and have wide application to transcendence theory and Diophantine problems. We use the following (see \cite[Proposition~3.10]{BEG}), which combines results of Matveev \cite{Mat} and Yu \cite{Yu} in the archimedean and non-archimedean settings respectively.

\begin{theorem} \label{thm:LinearFormsInLogs}
Let $\alpha_1,\ldots,\alpha_n \in K \setminus \{0 \}$, and $b_1,\ldots,b_n \in \Z$, not all zero. Put
\begin{align*}
\Lambda & := \alpha_1^{b_1} \cdots \alpha_n^{b_n} -1 \\
\Theta &:= \prod_{i=1}^n \max \left( h(\alpha_i), \frac{2}{[K:\Q] \left( \log (3[K:\Q]) \right)^3} \right), \\
B & := \max(3,|b_1|,\ldots,|b_n|)
\end{align*}
Let $v$ be a place of $K$, and write
$$
N(v) := \begin{cases} 2 & \text{if $v$ is infinite} \\
N_K \p & \text{if $v=\p$ is finite}. \end{cases}
$$
Suppose that $\Lambda \neq 0$. Then for $v \in M_K$ we have
$$
\log |\Lambda|_v > - c_1(n,[K:\Q]) \frac{N(v)}{\log N(v)} \Theta \log B,
$$
where $c_1(n,d) = 12d(16ed)^{3n+2}\max(1,\log d)^2$, with  (note an extra factor of $d$ due to our different normalisation of absolute values).
\end{theorem}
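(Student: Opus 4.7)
The statement is a packaging of two classical effective lower bounds for linear forms in logarithms. The plan is therefore not to produce an independent proof, but rather to describe how the stated inequality is obtained by invoking the theorems of Matveev (archimedean case) and Yu (non-archimedean case) as black boxes, and then harmonising normalisations.

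First, split according to the place $v$. Write $L := \alpha_1^{b_1}\cdots \alpha_n^{b_n}$, so $\Lambda = L - 1$, and observe that the hypothesis $\Lambda \ne 0$ is exactly what is needed to apply any linear-forms-in-logarithms bound, since it ensures that the relevant $p$-adic or archimedean logarithm is nonzero. If $v$ is archimedean, fix an embedding $K \hookrightarrow \overline{K}_v \subseteq \C$ realising $v$, and apply Matveev's theorem (in its sharpest modern formulation) to the images of $\alpha_1,\ldots,\alpha_n$ with exponent vector $(b_1,\ldots,b_n)$. Matveev's bound, stated with respect to the standard absolute value on $\C$, produces a lower bound of the form
$$
\log\bigl|\Lambda\bigr|_{\mathrm{std}} \;>\; -\,c_M(n,[K:\Q])\,\widetilde{\Theta}\,\log B,
$$
where $\widetilde{\Theta}$ is a product of the usual modified absolute logarithmic heights. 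Conversion to our normalisation $|\cdot|_v = |\cdot|_{\mathrm{std}}^{1/[K:\Q]}$ (or $2/[K:\Q]$ in the complex case) introduces a factor of $[K:\Q]$ on the left-hand side, which is the origin of the extra factor of $d=[K:\Q]$ in $c_1(n,d)$ flagged at the end of the statement. Since the height normalisation in $\Theta$ is the same absolute logarithmic height used throughout the paper, no further change is needed there, and one reads off $N(v)/\log N(v) = 2/\log 2 \ll 1$ as a harmless constant.

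For a non-archimedean place $v = \p$, apply instead Yu's $p$-adic analogue of Baker's theorem to $\alpha_1,\ldots,\alpha_n \in K_v^\times$. Yu's result yields a lower bound for $\log|\Lambda|_{v,\mathrm{std}}$ of the same shape, but with an additional factor $N_K\p/\log N_K\p$ reflecting the residue characteristic/degree of $\p$. Renormalising exactly as in the archimedean case gives the stated inequality in the case $v = \p$ finite, with $N(v) = N_K\p$. The modification $\max(h(\alpha_i),\,2/([K:\Q](\log 3[K:\Q])^3))$ inside the definition of $\Theta$ is exactly the truncated height introduced by Matveev (and adapted by Yu) to guarantee positivity of each factor and to produce a uniform bound across cases where some $h(\alpha_i)$ may be very small; it is carried through unchanged from the source theorems.

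Finally, the explicit constant $c_1(n,d)=12d(16ed)^{3n+2}\max(1,\log d)^2$ is obtained by taking the worse of Matveev's and Yu's numerical constants, absorbing implicit factors, and multiplying by the factor of $d$ from the normalisation. Concretely, one cites \cite[Proposition~3.10]{BEG}, where this reconciliation is already carried out uniformly in $v$. The only non-routine step in the plan is bookkeeping: the main obstacle is simply to keep track of the normalisation conventions, ensuring that the same $h$ and $|\cdot|_v$ appear on both sides and that the factor of $[K:\Q]$ introduced by our choice of absolute values is correctly absorbed into $c_1(n,[K:\Q])$ rather than into $\Theta$ or $\log B$.
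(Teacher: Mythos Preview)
Your proposal is correct and matches the paper exactly: the paper does not give an independent proof of this theorem but simply cites \cite[Proposition~3.10]{BEG}, noting that it combines Matveev's archimedean bound and Yu's $p$-adic bound, with the extra factor of $d=[K:\Q]$ arising from the paper's normalisation of absolute values. Your description of the bookkeeping involved (splitting on $v$, invoking Matveev or Yu, and absorbing the normalisation factor into $c_1$) is precisely the content behind that citation.
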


\subsection{The Berkovich projective line and adelic measures} \label{subsec:adelmeas}

For each place $v$ of $K$, the \emph{Berkovich projective line} $\BP(\C_v)$ is a Hausdorff, uniquely path connected, compact topological space which contains $\P^1(\C_v)$ as a dense subspace. When $v$ is archimedean, we simply have $\BP(\C_v) = \P^1(\C_v)$, but in the non-archimedean setting $\BP(\C_v)$ contains many additional points, corresponding to disks (or nested sequences thereof) in $\C_v$. For details of the construction of $\BP(\C_v)$, and its basic properties, see \cite[\textsection 3]{FRL}. One can study potential theory on $\BP(\C_v)$, and appropriate notions of a Laplacian $\Delta$, a mutual energy $( \cdot , \cdot )_v$ for signed Radon measures on $\BP(\C_v)$ etc. are introduced for example in \cite[\textsection 4]{FRL}

When $v$ is an infinite place of $K$, let $\lambda_v$ denote the probability measure proportional to Lebesgue measure on the unit circle $S^1 \subset \P^1(\C_v)$. When $v$ is finite, let $\lambda_v$ denote the point mass at the Gauss point (i.e. the point corresponding to the disk $D(0,1) \subset \C_v$) in $\BP(\C_v)$.

An \emph{adelic measure} $\rho = \{ \rho_v \}_{v \in M_K}$ consists of a probability measure $\rho_v$ on $\BP(\C_v)$ with continuous potentials for each place $v \in M_K$, such that $\rho_v = \lambda_v$ for all but finitely many places $v$. To an adelic measure $\rho$ we can associate an \emph{adelic height}, given by
\begin{equation} \label{eq:adelHeight}
h_\rho(F) := \frac{1}{2} \sum_{v \in M_K} ([F] - \rho_v, [F]-\rho_v)_v,
\end{equation}
for any finite set $F \subset \overline K$ invariant under the action of $\Gal(\overline K/K)$ (here $[F]$ is the measure $|F|^{-1} \sum_{z \in F} \delta_z$, where $\delta_z$ is the point mass at $z$, and we note that due to our normalisation of absolute values, $( \cdot, \cdot)_v$ coincides with the $\llrrparen{ \cdot , \cdot }_v$ used by Favre and Rivera-Letelier). For $\alpha \in \P^1(\overline K)$, we let $h_\rho(\alpha) = h_\rho(F)$ where $F$ is the orbit of $\alpha$ under the action of $\Gal(\overline K/K)$. When $\rho = \{ \lambda_v \}_{v \in M_K}$, we have $h_\rho = h$, the usual height. Given $0 < \kappa \leq 1$, we say that an adelic measure $\rho$ has \emph{$\kappa$-H\"{o}lder continuous potentials} if for each place $v$, there exists a function $g_v : \BP(\C_v) \to \R$ with $\Delta g_v = \rho_v - \lambda_v$ and a constant $C_v > 0$ such that 
\begin{equation} \label{eq:HoldContPot}
|g_v(z)-g_v(w)| \leq \begin{cases} C_v|z-w|_v^\kappa \text{ for } z,w \in \C_v & \text{if $v$ is archimedean}, \\ C_v \ud(z,w)^\kappa \text{ for } z,w \in \BP(\C_v) & \text{otherwise}. \end{cases}
\end{equation}
Here $\ud$ is the metric defined in \cite[\textsection \textsection 4.7]{FRL}. Note that in the archimedean case, we are using the standard Euclidean distance instead of the spherical distance (used in Favre and Rivera-Letelier's definition), since it is more convenient for our purposes.

If $v$ is an archimedean place, let $H^1(\P^1(\C_v))$ denote the Sobolev space of functions $\P^1(\C_v) \to \R$ whose weak partial derivatives exist and are locally square integrable. Given $f \in H^1(\P^1(\C_v))$, we define 
$$
\langle f,f \rangle = \int df \wedge d^c f = \frac{1}{2\pi} \int_{\C_v} \left( \frac{\partial f}{\partial x} \right)^2 + \left( \frac{\partial f}{\partial y} \right)^2  dx \wedge dy.
$$ There is also an analogous definition in the non-archimedean setting, see \cite[\textsection \textsection 5.5]{FRL}.

\section{Equilibrium measures for bounded sequences and quantitative equidistribution} \label{sec:QuantEquid}

Favre and Rivera-Letelier \cite[Theorem~7]{FRL} proved the following quantitative equidistribution result.

\defconstant{FRL}

\begin{theorem} \label{thm:QuantEquid}
Suppose $\rho = \{ \rho_v \}_{v \in M_K}$ is an adelic measure with $\kappa$-H\"{o}lder continuous potentials $g_v$, with constants $C_v >0$. Let $V = \{ v \in M_K : \rho_v \neq \lambda_v \}$. There exists a constant $\coFRL > 0$, depending only on $|V|$, $\kappa$, and $C_v$ for $v \in V$, such that for any place $v \in M_K$, any $\cC^1$ test function $\phi$ on $\BP(\C_v)$, and any finite $\Gal(\overline K/K)$-invariant subset $F$ of $\overline K$ we have
\begin{equation*}
\left| \frac{1}{|F|} \sum_{\alpha \in F} \phi(\alpha) - \int_{\BP(\C_v)} \phi \, d \rho_v \right| \leq \left( 2 h_{\rho}(F) + \coFRL \frac{\log |F|}{|F|} \right)^{\frac{1}{2}} \langle \phi, \phi \rangle^{\frac{1}{2}} +  \frac{\Lip(\phi)}{|F|^{1/\kappa}}.
\end{equation*}
\end{theorem}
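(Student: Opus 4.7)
The plan is to follow the approach of Favre and Rivera-Letelier. Writing the left-hand side as $\left|\int \phi \, d\mu_F\right|$ with $\mu_F := [F] - \rho_v$ a signed measure of total mass zero, the strategy is to bound this via a Cauchy--Schwarz type inequality coming from the Dirichlet pairing on $\BP(\C_v)$, of the schematic form
\[
\left|\int \phi \, d\mu\right|^2 \leq \langle \phi,\phi \rangle \cdot (\mu,\mu)_v,
\]
valid for mass-zero signed measures with sufficiently regular potentials, and then to control the mutual energy on the right by the adelic height $h_\rho(F)$.

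The obstruction is that the self-energy $([F],[F])_v$ diverges logarithmically along the diagonal since $[F]$ is purely atomic. I would therefore regularize at a scale $\epsilon > 0$: replace each point mass $\delta_\alpha$ in $[F]$ by a probability measure $\delta_{\alpha,\epsilon}$ of diameter $\leq \epsilon$ around $\alpha$ (using $|\cdot|_v$ in the archimedean case and the Berkovich metric $\ud$ otherwise), obtaining a regularized measure $[F]_\epsilon$ with finite self-energy. Decomposing
\[
\int \phi \, d([F]-\rho_v) = \int \phi \, d([F]-[F]_\epsilon) + \int \phi \, d([F]_\epsilon - \rho_v),
\]
the first term is bounded by $\Lip(\phi) \cdot \epsilon$ since each atom is displaced by at most $\epsilon$, while Cauchy--Schwarz applied to the second yields $\langle \phi,\phi\rangle^{1/2} \cdot ([F]_\epsilon - \rho_v, [F]_\epsilon - \rho_v)_v^{1/2}$.

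To bound the regularized energy, I would compare it to $2h_\rho(F)$. Since the mutual energy pairing is positive semi-definite on mass-zero measures with continuous potentials, every term $([F] - \rho_w, [F] - \rho_w)_w$ in the adelic sum defining $h_\rho(F)$ is non-negative (after the appropriate interpretation), so the single term at $v$ is at most $2 h_\rho(F)$. The extra error from regularization splits into two pieces: a diagonal self-interaction of the mollified atoms, contributing $O(\log(1/\epsilon)/|F|)$; and a cross term $\int g_v \, d([F]-[F]_\epsilon)$ controlled by the $\kappa$-Hölder estimate \eqref{eq:HoldContPot} to give $O(C_v \epsilon^\kappa)$. Summing analogous contributions over the finitely many $w \in V$ where $\rho_w \neq \lambda_w$ produces an overall error depending only on $|V|$, $\kappa$, and $(C_w)_{w \in V}$.

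Finally, optimizing by setting $\epsilon = |F|^{-1/\kappa}$ makes $\epsilon^\kappa = 1/|F|$ and $\log(1/\epsilon) \asymp \log|F|$, producing the $\coFRL \log|F|/|F|$ term inside the square root together with the $\Lip(\phi)/|F|^{1/\kappa}$ term outside. The main technical difficulty lies in the uniform treatment of archimedean and non-archimedean places: in the non-archimedean setting one must work with the tree Laplacian on $\BP(\C_v)$ and realize $\delta_{\alpha,\epsilon}$ (for instance as the uniform measure supported at the Gauss-type point of a Berkovich disc of radius $\epsilon$ around $\alpha$), then verify that its self-energy grows like $\log(1/\epsilon)$ and that its interaction with the Hölder continuous potential $g_v$ is of the expected order $\epsilon^\kappa$, both of which are more delicate than their archimedean counterparts.
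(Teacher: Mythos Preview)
Your outline is essentially the Favre--Rivera-Letelier argument, and it is correct in its broad strokes: regularize $[F]$ at scale $\varepsilon$, apply the Cauchy--Schwarz inequality for the Dirichlet pairing, compare the regularized self-energy at $v$ to the full adelic sum $2h_\rho(F)$ via positivity of the pairing on mass-zero measures, absorb the diagonal and H\"older errors, and optimize $\varepsilon = |F|^{-1/\kappa}$.

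Note, however, that the paper does not supply its own proof of this statement: Theorem~\ref{thm:QuantEquid} is quoted directly from \cite[Theorem~7]{FRL}, with only the remark that restricting $F \subset \overline K$ (so $\infty \notin F$) lets one use the Euclidean rather than spherical metric at archimedean places, as in \cite[Proposition~2.2]{Yap}, and that an explicit constant is computed in \cite[Theorem~3.15]{YThesis}. So there is nothing in the paper to compare your argument against beyond the citation itself; your sketch is a faithful summary of the cited proof.

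One small point worth tightening in your write-up: the sentence ``the single term at $v$ is at most $2h_\rho(F)$'' is not literally how the comparison goes. The off-diagonal pairing $([F]-\rho_w,[F]-\rho_w)_w$ at an individual place need not be non-negative; what is non-negative is the \emph{regularized} energy $([F]_\varepsilon - \rho_w, [F]_\varepsilon - \rho_w)_w$. One bounds the term at $v$ by the sum over all $w$ of these regularized energies, and it is this global sum that equals $2h_\rho(F)$ plus the diagonal and H\"older corrections (summed over $w \in V$ and, for the diagonal piece, handled via the product formula at the remaining places). Your subsequent discussion of the error terms already accounts for this, so the gap is only in the phrasing.
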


Here $\Lip(\phi)$ denotes the Lipschitz constant of $\phi$ with respect to the distances as in \eqref{eq:HoldContPot}. Our assumption that $F$ does not contain $\infty$ allows us to replace the spherical distance with the Euclidean distance at archimedean places, as noted in \cite[Proposition~2.2]{Yap} (see \cite[Theorem~3.15]{YThesis} for an explicit computation of $\coFRL$). In fact, it is possible to loosen the $\cC^1$ regularity requirement on the test function $\phi$ to H\"{o}lder continuity at archimedean places. This form of quantitative equidistribution can be used to bound the number of conjugates of a point of zero height (with respect to an adelic measure) lying in a small disc. For the following see \cite[Proposition~2.3]{Yap} or \cite[Proposition~3.16]{YThesis}.

\defconstant{EDB}

\begin{prop} \label{prop:EquidDiscBd}
Let $\cZ$ be a finite $\Gal(\overline K/K)$-invariant subset of $\overline K$, and let $\rho = \{ \rho_v \}_{v \in M_K}$ be an adelic measure with H\"{o}lder continuous potentials (of exponent $\kappa$) such that $h_\rho(\cZ)=0$. Let $v$ be an archimedean place of $K$, then for any $w \in \C_v$ and $0 < \varepsilon < 1/e$, we have
$$
| \cZ \cap D(w,\varepsilon)| \leq  \rho_v(D(w,e \varepsilon))|\cZ|  + \coEDB \left( \frac{1}{\varepsilon |\cZ|^{1/\kappa-1}} + \sqrt{|\cZ| \log |\cZ|} \right),
$$
where $\coEDB>0$ is a constant, effectively computable in terms of the constants and exponent of H\"{o}lder continuous potentials of $\rho$.
\end{prop}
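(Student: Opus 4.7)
The plan is to apply Theorem~\ref{thm:QuantEquid} to a carefully chosen bump function supported on an annulus around $w$. The natural choice is the logarithmic cutoff
\[
\phi(z) := \max\bigl(0, \min\bigl(1, \log(e\varepsilon/|z-w|_v)\bigr)\bigr),
\]
which equals $1$ on $D(w,\varepsilon)$, vanishes outside $D(w,e\varepsilon)$, and decreases from $1$ to $0$ on the annulus $\varepsilon \leq |z-w|_v \leq e\varepsilon$. Since $|\nabla \log|z-w|_v|^2 = |z-w|_v^{-2}$, a direct polar computation on this annulus yields
\[
\langle \phi,\phi \rangle = \frac{1}{2\pi} \int_\varepsilon^{e\varepsilon} \frac{2\pi r}{r^2}\,dr = 1,
\]
while the Lipschitz constant of $\phi$ with respect to the Euclidean distance is $\Lip(\phi) = 1/\varepsilon$.

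Next I would feed $\phi$ and $F = \cZ$ into Theorem~\ref{thm:QuantEquid}. Since $h_\rho(\cZ) = 0$ by hypothesis, the height contribution vanishes, and multiplying the resulting inequality by $|\cZ|$ yields
\[
\Bigl|\sum_{\alpha \in \cZ} \phi(\alpha) - |\cZ| \int_{\BP(\C_v)} \phi\, d\rho_v \Bigr| \leq \sqrt{\coFRL\, |\cZ| \log |\cZ|} + \frac{1}{\varepsilon \, |\cZ|^{1/\kappa - 1}}.
\]
Because $\chi_{D(w,\varepsilon)} \leq \phi \leq \chi_{D(w,e\varepsilon)}$, the sum on the left is at least $|\cZ \cap D(w,\varepsilon)|$ and the integral on the right is at most $\rho_v(D(w,e\varepsilon))$. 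Rearranging and absorbing $\sqrt{\coFRL}$ into the constant gives the claimed estimate, with $\coEDB$ depending only on $\coFRL$, hence ultimately only on the H\"{o}lder data of $\rho$.

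The main technical caveat is that Theorem~\ref{thm:QuantEquid} is stated for $\cC^1$ test functions, whereas $\phi$ is merely Lipschitz. This is precisely the loosening noted in the paragraph preceding the proposition, so $\phi$ is already admissible; alternatively one can convolve $\phi$ with a standard mollifier, verify that the Dirichlet energy and Lipschitz constant are preserved up to $o(1)$ error as the mollification scale tends to zero, and pass to the limit. The assumption $\varepsilon < 1/e$ ensures $e\varepsilon < 1$, which both keeps the support of $\phi$ away from $\infty$ and guarantees $\log(e\varepsilon/|z-w|_v) > 0$ on the inner disk so the cutoff is indeed positive there. No step presents a serious difficulty once the right test function is identified; the proof is essentially a bookkeeping exercise with the explicit constants supplied by Theorem~\ref{thm:QuantEquid}.
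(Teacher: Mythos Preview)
Your proposal is correct and follows the expected approach: the paper does not give an in-text proof but refers to \cite[Proposition~2.3]{Yap} and \cite[Proposition~3.16]{YThesis}, and your logarithmic bump function is precisely (a translated, normalised version of) the test function $\phi_{\delta,R}$ whose Dirichlet energy and Lipschitz constant are computed in Lemma~\ref{lem:ArchTruncTest}. Your handling of the $\cC^1$ versus Lipschitz issue is also in line with the remark preceding the proposition.
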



Favre and Rivera-Letelier also show \cite[Theorem~8]{FRL} that for a rational function $\rat$, $\rho_{\rat} = \{ \rho_{\rat,v} \}_{v \in M_K}$ is an adelic measure with H\"{o}lder continuous potentials, where $\rho_{\rat,v}$ is the equilibrium measure of $\rat$ at $v$, and that the adelic height $h_{\rho_{\rat}}$ coincides with the canonical height $\canheight{\rat}$ for $\rat$. We will show that a similar statement holds for sequences $\bm{f}$ of rational functions obtained from a finitely generated rational semigroup $\cG$, and in particular that the constants and exponent of H\"{o}lder continuity depend only on $\cG$.

\begin{prop}
\label{prop:seqMeas}
Let $\cG = \langle f_1, \ldots, f_s \rangle$ be a semigroup of rational functions defined over $K$ with $\deg f_i = d_i \geq 2$. Let $\bm{f} = (f_{i_k})_{k=1}^\infty$ be a sequence obtained from $\cG$, that is, each $i_k \in \{ 1, \ldots, s \}$. Then
\begin{enumerate}
\item[(a)] For each $v \in M_K$, the sequence $(d_{i_1} \cdots d_{i_n})^{-1} f_{i_1 \cdots i_n}^* \lambda_v$ converges to a measure $\rho_{\bm{f},v}$;
\item[(b)] $\rho_{\bm{f}} := \{ \rho_{\bm{f},v} \}_{v \in M_K}$ is an adelic measure, whose adelic height $h_{\rho_{\bm{f}}}$ coincides with the canonical height $\hat h_{\bm{f}}$ associated to $\bm{f}$;
\item[(c)] $\rho_{\bm{f}}$ has $\kappa$-H\"{o}lder continuous potentials with constants $C_v >0$, where $\kappa$ and $C_v$ depend only on $\cG$.
\end{enumerate}
\end{prop}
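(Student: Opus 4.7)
The plan is to adapt the construction of Favre and Rivera-Letelier \cite[Theorem~8]{FRL} for a single rational map to sequences $\bm{f}$ obtained from the finitely generated semigroup $\cG$, exploiting the finiteness of the generating set to make all constants uniform over the choice of sequence.

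For part~(a), I would fix a place $v$ and, for each generator $f_i$, denote by $\phi_{i,v} \in C(\BP(\C_v))$ the continuous potential of $d_i^{-1} f_i^* \lambda_v - \lambda_v$, whose existence is guaranteed by the single-map theory. Since $f_{i_1 \cdots i_n} = f_{i_n} \circ \cdots \circ f_{i_1}$, pullback reverses composition, and a telescoping computation gives
\[
(d_{i_1}\cdots d_{i_n})^{-1} f_{i_1 \cdots i_n}^* \lambda_v - \lambda_v = \Delta G_n^v, \qquad G_n^v := \sum_{k=0}^{n-1} \frac{\phi_{i_{k+1},v} \circ f_{i_1 \cdots i_k}}{d_{i_1}\cdots d_{i_k}}.
\]
As there are only $s$ generators, $M_v := \max_i \|\phi_{i,v}\|_\infty$ is finite; since each $d_{i_k} \geq 2$ the series defining $G_v := \lim_n G_n^v$ converges uniformly. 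It follows that $\rho_n^v$ converges weakly to $\rho_{\bm{f},v} := \lambda_v + \Delta G_v$, a probability measure with continuous potential.

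For part~(b), at every non-archimedean place $v$ at which every generator has good reduction, $f_i^* \lambda_v = d_i \lambda_v$ for each $i$, so $\rho_n^v = \lambda_v$ for all $n$ and $\rho_{\bm{f},v} = \lambda_v$; this holds outside a finite set of places, giving the adelic property together with the continuity from part~(a). Passing to the limit in the identity $d_{i_1}^{-1} f_{i_1}^* \rho_n^{T(\bm{f}),v} = \rho_{n+1}^{\bm{f},v}$ yields $f_{i_1}^* \rho_{T(\bm{f}),v} = d_{i_1} \rho_{\bm{f},v}$; interpreting this through \eqref{eq:adelHeight} via the standard relation between mutual energies and pullback (as in \cite[\S4]{FRL}) then gives the functional equation $h_{\rho_{T(\bm{f})}} \circ f_{i_1} = d_{i_1} h_{\rho_{\bm{f}}}$. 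Since $|h_{\rho_{\bm{f}}}-h|$ is bounded by the finite sum $\sum_v \|G_v\|_\infty$, the uniqueness clause of Proposition~\ref{prop:SeqCanHeight} forces $h_{\rho_{\bm{f}}} = \hat h_{\bm{f}}$.

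Part~(c) is the main technical point. At each of the finitely many ``active'' places $v$ (the archimedean ones and those at which some generator has bad reduction), the single-map theory supplies a H\"{o}lder exponent and constant for each $\phi_{i,v}$; taking the minimum exponent and maximum constant over $i \in \{1,\ldots,s\}$ gives uniform H\"{o}lder data $(\kappa_v, C_v)$ for the finite family $\{\phi_{i,v}\}_i$. Each $f_i$ is Lipschitz in the relevant metric on $\BP(\C_v)$ (Euclidean at archimedean $v$, $\ud$ at non-archimedean $v$), and taking a maximum over $i$ yields a uniform constant $L_v$. Consequently $\phi_{i_{k+1},v} \circ f_{i_1 \cdots i_k}$ is H\"{o}lder of exponent $\kappa_v$ with constant at most $C_v L_v^{k\kappa_v}$, while being uniformly bounded by $2M_v$. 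Splitting the series defining $G_v$ at an index $N$, applying the H\"{o}lder bound to the first $N$ terms and the $L^\infty$ bound to the tail, produces
\[
|G_v(z)-G_v(w)| \ll L_v^{N\kappa_v}|z-w|^{\kappa_v} + d_{\min}^{-N}, \qquad d_{\min} := \min_i d_i \geq 2,
\]
and optimising $N$ as a function of $|z-w|$ yields a H\"{o}lder exponent for $G_v$ depending only on $\kappa_v$, $L_v$, and $d_{\min}$, hence only on $\cG$. Taking the minimum exponent and accumulating constants over the finitely many active places completes the argument. The hard part will be securing uniform Lipschitz-type control over all iterates $f_{i_1 \cdots i_k}$ simultaneously at every active place, particularly in the non-archimedean setting on $\BP(\C_v)$ with the metric $\ud$, where naive iteration of local Lipschitz constants can degenerate near critical points and must be handled with care.
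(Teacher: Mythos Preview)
Your outline matches the paper's proof closely: the telescoping potential sum for~(a), the good-reduction argument plus uniqueness of canonical heights for~(b), and the split-at-$N$-and-optimise estimate for~(c) are all exactly what the paper does. Two remarks. First, in~(b) the passage from the measure relation $f_{i_1}^* \rho_{T(\bm{f}),v} = d_{i_1} \rho_{\bm{f},v}$ to the height functional equation $h_{\rho_{T(\bm{f})}}\circ f_{i_1}=d_{i_1}h_{\rho_{\bm{f}}}$ is not as immediate as you suggest, since the adelic height is a self-pairing $([F]-\rho_v,[F]-\rho_v)_v$ rather than a simple integral against $\rho_v$; the paper follows \cite{FRL} in introducing a cross-ratio pairing and invoking their \emph{formule de transformation} together with \cite[Lemme~6.2]{FRL} to convert the pullback relation into identities between adelic heights of Galois-invariant sets. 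Second, your closing worry in~(c) is unfounded: the paper simply uses submultiplicativity $\Lip(f_{i_1\cdots i_k}) \leq \prod_j \Lip(f_{i_j})$ in the metric $\ud$ (spherical at archimedean places), where each generator is globally Lipschitz on the compact space $\BP(\C_v)$, so no special care near critical points is needed.
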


\begin{proof}
Let $v$ be any place of $K$. As in \cite[Proposition~6.5]{FRL}, for each $i \in \{ 1, \ldots , s \}$, we can find a Lipschitz potential $g_i$ such that $\Delta g_i = d_i^{-1} f_i^* \lambda_v - \lambda_v$. By iterating successively, we see that for all $n \geq 1$,
$$
(d_{i_1} \cdots d_{i_n})^{-1} f_{i_1 \cdots i_n}^* \lambda_v - \lambda_v = \Delta \left( \sum_{k=1}^n (d_{i_1} \cdots d_{i_{k-1}})^{-1} g_{i_k} \circ f_{i_1 \cdots i_{k-1}} \right).
$$
The series $\Delta \left( \sum_{k \geq 1} (d_{i_1} \cdots d_{i_{k-1}})^{-1} g_{i_k} \circ f_{i_1 \cdots i_{k-1}} \right) =: \Delta G_v$ converges uniformly on $\BP(\C_v)$, and so the sequence of probability measures $\{ (d_{i_1} \cdots d_{i_n})^{-1} f_{i_1 \cdots i_n}^* \lambda_v \}_{n \geq 1}$ converges to a measure which we denote $\rho_{\bm{f},v} = \lambda_v + \Delta G_v$. This proves (a).

If $v$ is such that $f_i$ has good reduction at $v$ for all $i \in \{1,\ldots,s\}$, we have $d_i^{-1} f_i^* \lambda_v = \lambda_v$ for all $i \in \{1, \ldots, s \}$, and so it is easy to see that $\rho_{\bm{f},v}=\lambda_v$. This is the case for all but finitely many places, so $\rho_{\bm{f}} := \{ \rho_{\bm{f},v} \}_{v \in M_K}$ is an adelic measure. Thus, by the uniqueness statement in Proposition~\ref{prop:SeqCanHeight}, to complete the proof of (b), it suffices to show that $d_{i_1} h_{\rho_{\bm{f}}} = h_{\rho_{T(\bm{f})}} \circ f_{i_1}$. We proceed as in the proof of \cite[Theorem~8]{FRL}, by introducing a \emph{cross ratio} as follows. For $v \in M_K$ and $z_0,z_1,w_0,w_1 \in \C_v$, set
$$
(z_0,z_1,w_0,w_1)_v = \log \frac{|z_0-w_0|_v \cdot |z_1-w_1|_v}{|z_0-w_1|_v \cdot |z_1-w_0|_v}.
$$
This extends to a continuous function on the set
$$
\cD := \{(z_0,z_1,w_0,w_1) \in \BP(\C_v)^4 : \text{if $z_i = w_j$ then } z_i \notin \P^1(\C_v) \}.
$$
Given signed Radon measures $\mu_0, \mu_1, \nu_0, \nu_1$ on $\BP(\C_v)$ such that $(\cdot, \cdot, \cdot, \cdot)_v$ is integrable with respect to $\mu_0 \otimes \mu_1 \otimes \nu_0 \otimes \nu_1$ on $\cD$, let
$$
(\mu_0,\mu_1,\nu_0,\nu_1)_v := \int_{\cD} (z_0,z_1,w_0,w_1)_v d\mu_0(z_0) \otimes d \mu_1(z_1) \otimes d \nu_0(w_0) \otimes d \nu_1(w_1).
$$
Then for any rational function $f$ of degree $d \geq 1$ defined over $K$, and measures signed Radon measures $\mu_0,\mu_1,\nu_0,\nu_1$ on $\BP(\C_v)$ such that
$$
(f_* \mu_0, f_* \mu_1,\nu_0,\nu_1)_v \: \text{ and } \: (\mu_0, \mu_1, f^* \nu_0, f^* \nu_1 )_v
$$
are well-defined, we have \cite[Formule~de~transformation]{FRL}
\begin{equation} \label{eq:TransFormula}
(f_* \mu_0, f_* \mu_1,\nu_0,\nu_1)_v = d^{-1} \cdot (\mu_0, \mu_1, f^* \nu_0, f^* \nu_1 )_v.
\end{equation}
Moreover, if $\rho = \{ \rho_v \}$ is an adelic measure and $F,F'$ and $F''$ are finite subsets of $\P^1(\overline K)$ which are invariant under the action of $\Gal(\overline K/K)$, then \cite[Lemme~6.2]{FRL}
\begin{itemize}
\item If $F''$ is disjoint from $F \cup F'$, then
\begin{equation} \label{eq:CR1}
\sum_{v \in M_K} ( [F],[F'],[F''], \rho_v )_v = h_\rho(F)-h_\rho(F').
\end{equation}
\item If $F$ and $F'$ are disjoint, then
\begin{equation} \label{eq:CR2}
\sum_{v \in M_K} ( \rho_v, [F], \rho_v, [F'])_v = h_\rho(F)+h_\rho(F').
\end{equation}
\end{itemize}
Now, let $F \subset \P^1(\overline K)$ be a finite $\Gal(\overline K/K)$-invariant set. Then $f_{i_1}(F)$ and $f_{i_1}^{-1}(F)$ are finite and $\Gal(\overline K/K)$-invariant. Also, $(f_{i_1})_*[F] = [f_{i_1}(F)]$, and if $F$ contains no critical values of $f_{i_1}$, then any element of $F$ has exactly $d_{i_1}$ preimages under $f_{i_1}$, and $d_{i_1}^{-1} f_{i_1}^*[F] = [f_{i_1}^{-1}(F)]$.

Given finite $\Gal(\overline K/K)$-invariant sets $F,F' \subset \P^1(\overline K)$, let $F''$ be a finite $\Gal(\overline K/K)$-invariant subset of $\P^1(\overline K)$ containing no critical values of $f_{i_1}$, and such that $f_{i_1}^{-1}(F'')$ is disjoint from $F \cup F'$. Then
$$
( (f_{i_1})_*[F], (f_{i_1})_*[F'], [F''], \rho_{T(\bm{f}),v} )_v = ( [f_{i_1}(F)], [f_{i_1}(F')], [F''], \rho_{T(\bm{f}),v} )_v
$$
and
$$
([F],[F'], d_{i_1}^{-1} f_{i_1}^*[F''], \rho_{\bm{f},v} )_v = ([F],[F'], [f_{i_1}^{-1}(F'')], \rho_{\bm{f},v} )_v
$$
are well-defined and \eqref{eq:TransFormula} gives
$$
([f_{i_1}(F)],[f_{i_1}(F')],[F''],\rho_{T(\bm{f}),v})_v = d_{i_1} ([F],[F'],[f_{i_1}^{-1}(F'')], \rho_{\bm{f},v})_v.
$$
Summing over all places, \eqref{eq:CR1} gives
$$
h_{\rho_{T(\bm{f})}}(f_{i_1}(F)) - h_{\rho_{T(\bm{f})}}(f_{i_1}(F')) = d_{i_1} \left( h_{\rho_{\bm{f}}}(f_{i_1}(F)) - h_{\rho_{\bm{f}}}(f_{i_1}(F)) \right),
$$
which implies that the function $h_{\rho_{T(\bm{f})}} \circ f_{i_1} - d_{i_1} \cdot h_{\rho_{\bm{f}}}$ is constant. A similar argument with \eqref{eq:CR2} shows that if $F'$ contains no critical values of $f_{i_1}$, and $f_{i_1}^{-1}(F')$ is disjoint from $F$, then
$$
h_{\rho_{T(\bm{f})}}(f_{i_1}(F)) + h_{\rho_{T(\bm{f})}}(f_{i_1}(F')) = d_{i_1} \left( h_{\rho_{\bm{f}}}(f_{i_1}(F)) + h_{\rho_{\bm{f}}}(f_{i_1}(F)) \right),
$$
and so $h_{\rho_{T(\bm{f})}} \circ f_{i_1} - d_{i_1} \cdot h_{\rho_{\bm{f}}}$ is identically zero, completing the proof of (b).

Suppose now that $v$ is either an infinite place, or a finite place of bad reduction. Let $N \geq 0$ be an integer, and set $d = \min_{1 \leq i \leq s} d_i \geq 2$. We have for any $z,w \in \BP(\C_v)$,
\begin{align*}
|G_v(z) & -G_v(w)|_v \leq \sum_{k=1}^N (d_{i_1} \cdots d_{i_{k-1}})^{-1} | g_{i_k} \circ f_{i_1 \cdots i_{k-1}}(z) - g_{i_k} \circ f_{i_1 \cdots i_{k-1}}(w)|_v \\
& \qquad \qquad + \max_{1 \leq i \leq s} \sup |g_i|_v \sum_{k=N}^\infty (d_{i_1} \cdots d_{i_k})^{-1} \\
& \leq \left( \sum_{k=0}^{N-1} d^{-k} \max_{1 \leq i \leq s} \Lip_v(g_i) \left( \max_{1 \leq i \leq s} \Lip(f_i) \right)^k \right) \ud(z,w) + \max_{1 \leq i \leq s} \sup |g_i|_v \sum_{k=N}^\infty d^{-k}.
\end{align*}
Then, writing $E_v = \max_{1 \leq i \leq s} \Lip_v(g_i)$, $L_v = \max \left \{ 2d \max_{1 \leq i \leq s} \Lip_v(f_i) \right \}$ and $B_v = \max_{1 \leq i \leq s} \sup |g_i|_v$, we have
\begin{align*}
|G_v(z)-G_v(w)|_v & \leq \frac{E}{\frac{L}{d}-1} \left( \frac{L}{d} \right)^N \ud(z,w) + dB \left( \frac{1}{d} \right)^N \\
& = \left( \frac{E}{\frac{L}{d}-1} L^N \ud(z,w) + dB \right) \left( \frac{1}{d} \right)^N.
\end{align*}
If we take $N=\lfloor-\log(\ud(z,w))/\log(L) \rfloor$, then $N = -\log_L \ud(z,w) - \delta$ for some $0 \leq \delta < 1$, and we obtain
\begin{align} \label{eq:HolderBound}
|G_v(z)-G_v(w)| & \leq \left( \frac{E}{\frac{L}{d}-1} L^{-\log_L \ud(z,w)} \ud(z,w) L^{-\delta} + dB \right) d^{\frac{\log \ud(z,w)}{\log L} + \delta} \notag \\
& = d^{\delta} \left( \frac{dE}{L-d} L^{-\delta} + dB \right) \ud(z,w)^{\frac{\log d}{\log L}} \notag \\ 
& < \left(\frac{2d^2 E}{L}+d^2B \right) \ud(z,w)^{\frac{\log d}{\log L}},
\end{align}
noting that $L \geq 2d$. This completes the proof of (c).
\end{proof}

We define below a useful archimedean test function to which we will apply quantitative equidistribution, and control its regularity.

\begin{lemma} \label{lem:ArchTruncTest}
Let $v$ be an archimedean place of $K$, and let $0 < \delta < R$. Let $\phi_{\delta,R} : \P^1(\C_v) \to \R$ be the function given by $\phi_{\delta,M}(z) = \log \min(R,\max(\delta,|z|_v))$. Then $\phi_{\delta,R} \in H^1(\P^1(\C_v))$ is Lipschitz on $\C_v$ with constant $1/\delta$, and satisfies
$$
\langle \phi_{\delta,R}, \phi_{\delta,R} \rangle = \log R - \log \delta.
$$
\end{lemma}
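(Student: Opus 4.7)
My plan is to exploit the radial symmetry of $\phi_{\delta,R}$, which factors as $\phi_{\delta,R}(z) = \psi(|z|_v)$ with $\psi(r) := \log\min(R,\max(\delta,r))$. The function $\psi$ is continuous and piecewise smooth on $[0,\infty)$: equal to $\log\delta$ on $[0,\delta]$, to $\log r$ on $[\delta,R]$, and to $\log R$ on $[R,\infty)$, with derivative $\psi'(r) = 1/r$ on $(\delta,R)$ (and zero elsewhere), so $\|\psi'\|_\infty = 1/\delta$.

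For the Lipschitz bound on $\C_v$, I would combine this with the reverse triangle inequality $\bigl||z|_v - |w|_v\bigr| \leq |z-w|_v$ for the normalized archimedean absolute value. This is not entirely tautological because $|\cdot|_v$ is a power $|\cdot|^c$ with $c \in (0,1]$, but it follows from the subadditivity $(s+t)^c \leq s^c + t^c$ (valid for $c \leq 1$), which gives the elementary inequality $|a^c - b^c| \leq |a-b|^c$ for $a,b \geq 0$. Composing these bounds,
$$
|\phi_{\delta,R}(z) - \phi_{\delta,R}(w)| \leq \|\psi'\|_\infty \cdot \bigl||z|_v - |w|_v\bigr| \leq \frac{1}{\delta}\,|z-w|_v.
$$

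To see that $\phi_{\delta,R} \in H^1(\P^1(\C_v))$, I would observe that $\phi_{\delta,R}$ is globally Lipschitz on $\C_v$ and equal to the constant $\log R$ outside the compact disk $\{|z|_v \leq R\}$, so it extends Lipschitz continuously to $\P^1(\C_v)$ and its weak partial derivatives exist a.e., are bounded, and have compact support in $\C_v$. For the Dirichlet integral I would pass to polar coordinates; by radial symmetry the gradient has squared magnitude $|\psi'(r)|^2 = 1/r^2$ on the annulus $\{\delta < r < R\}$ and vanishes elsewhere, so
$$
\langle \phi_{\delta,R}, \phi_{\delta,R}\rangle = \frac{1}{2\pi}\int_0^{2\pi}\int_\delta^R \frac{1}{r^2}\, r\, dr\, d\theta = \int_\delta^R \frac{dr}{r} = \log R - \log\delta.
$$

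There is no substantive obstacle; the statement is essentially a direct computation in polar coordinates. The only step demanding any attention is the reverse triangle inequality in the normalized absolute value, which reduces to the elementary subadditivity estimate noted above.
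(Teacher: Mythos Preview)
Your proof is correct and follows essentially the same route as the paper's: both compute the Dirichlet energy by passing to polar coordinates over the annulus $\{\delta < r < R\}$, and both derive the Lipschitz bound from the fact that the radial profile $\psi(r)=\log r$ has derivative at most $1/\delta$ on $[\delta,R]$. Your explicit justification of the reverse triangle inequality $\bigl||z|_v-|w|_v\bigr|\leq|z-w|_v$ via the subadditivity $(s+t)^c\leq s^c+t^c$ for the normalisation exponent $c\leq 1$ is a detail the paper's proof leaves implicit.
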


\begin{proof}
It is clear that in local coordinates $x,y$, $\phi_{\delta,R}$ has weak partial derivatives
$$
\frac{\partial \phi_{\delta,R}}{\partial x}(x,y) = \I \left \{ \delta \leq \sqrt{x^2+y^2} \leq R \right \} \frac{x}{x^2+y^2},
$$
and similarly with respect to $y$, where $\I$ is an indicator function. Thus $\phi_{\delta,R} \in H^1(\P^1(\C_v))$, with
\begin{align*}
\langle \phi_{\delta,R}, \phi_{\delta,R} \rangle & = \frac{1}{2\pi} \int_{\C_v} \left( \frac{\partial \phi_{\delta,R}}{\partial x} \right)^2 + \left( \frac{\partial \phi_{\delta,R}}{\partial x} \right)^2  dx dy = \frac{1}{2\pi} \int_{\delta \leq \sqrt{x^2+y^2} \leq R} \frac{dx dy}{x^2+y^2} \\ 
& = \frac{1}{2\pi} \int_0^{2 \pi} \int_\delta^R \frac{dr d \theta}{r} = \log R - \log \delta.
\end{align*}
Also, $\phi_{\delta,R}$ is Lipschitz with constant $1/\delta$ since this is the maximum of the derivative of $\log x$ on the interval $[\delta,R]$.
\end{proof}

\section{Structure of preperiodic points for monomial semigroups} \label{sec:preperStruct}

Given a semigroup $\cG$ generated by monomials, it is easy to see that all of its preperiodic points are roots of some binomial $X^N-a$. In the case of a single power map, we have $a=1$, and so the irreducible factors of the binomial are easily understood, namely, they are the cyclotomic polynomials $\Phi_d$ of degree $\varphi(d)$ for $d \mid N$. In generality, however, it is much more difficult to control the structure and degree of irreducible factors of $X^N-a$, since its some of its obvious factors may still be reducible. For example,
\begin{align*}
X^6+27 &=  (X^2+3)(X^4-3X^2+9) \\
& = (X^2+3)(X^2-3X+3)(X^2+3X+3).
\end{align*}
Fortunately, due to results of Schinzel \cite{Sch}, the extent to which the non-binomial parts of the ``obvious'' factorisation of $X^N-a$ can be reduced further is limited in a specific sense. Moreover, in our context, the constant term $a$ is a product of powers of finitely many elements of the number field $K$. We show that this implies that any preperiodic point for $\cG$ can be written in the following convenient presentation, from which we later extract a useful lower bound for its distance to a non-preperiodic point using Theorem~\ref{thm:LinearFormsInLogs}.

\begin{prop} \label{prop:PreperStruct}
Let $\cG = \langle f_1, \ldots, f_s \rangle$, where for $i=1,\ldots,s$, $f_i(z)=a_i z^{d_i}$ with $a_i \in K \setminus \{ 0 \}$ and $|d_i| \geq 2$, and let $\alpha \in \overline K \setminus \{0,\infty\}$ be a preperiodic point for $\cG$. Then $\alpha = \zeta \gamma$, where $\zeta$ is a root of unity, and either $\gamma = 1$ or $\gamma$ is a root of
\begin{equation} \label{eq:gammastruct}
X^M - a_1^{m_1} \cdots a_s^{m_s} b,
\end{equation}
for some $M \geq 1$, $m_1,\ldots,m_s \in \Z$ with $|m_1|,\ldots,|m_s| < M$, and
\begin{align*}
b \in \cB := \big\{ b & \in K : b \text{ is a root of } X^L - \xi a_1^{\ell_1} \cdots a_s^{\ell_s} \text{ for some } L \geq 1, \, \ell_i \in \Z \\
& \text{with } |\ell_i| < L \text{ for } 1 \leq i \leq s, \text{ and } \xi \in K \text{ a root of unity} \big\}.
\end{align*}
For all $b \in \cB$,
\begin{equation} \label{eq:htbd}
h(b) \leq h(a_1) + \cdots + h(a_s),
\end{equation}
and so $\cB$ is a finite set. Moreover, $M/2 \leq [K(\gamma):K] \leq M$, and
$$
[K(\zeta \gamma):K] \geq \max \left( \frac{M}{w(K)}, \frac{\max ([K(\zeta):K], M/2)}{\min([K(\zeta):K],M)} \right),
$$
where $w(K)$ denotes the number of roots of unity in $K$. In particular, if $\zeta$ is a primitive $Q$-th root of unity, then
\begin{equation} \label{eq:PreperDegBd}
\log [K(\alpha): K] \gg \log (MQ),
\end{equation}
where the implied constant depends only on $[K:\Q]$.
\end{prop}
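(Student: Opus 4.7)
\smallskip

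\textbf{Proof plan.} The strategy is to translate preperiodicity into a binomial equation $\alpha^N = c \in K^*$ with $c$ a product of powers of the $a_i$'s, then use Capelli's theorem and Schinzel's classification of factorisations of lacunary binomials \cite{Sch} to extract a root-of-unity factor from $\alpha$ and reduce the exponents of the $a_i$'s modulo $M$.

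Since $\alpha$ is preperiodic for $\cG$, there exist $n > m \geq 0$ and indices $i_1, \ldots, i_n \in \{1, \ldots, s\}$ with $f_{i_1 \cdots i_n}(\alpha) = f_{i_1 \cdots i_m}(\alpha)$. An induction on $k$ yields
\[
f_{i_1 \cdots i_k}(z) = A_k z^{D_k}, \qquad D_k := d_{i_1} \cdots d_{i_k}, \qquad A_k := \prod_{j=1}^k a_{i_j}^{D_k/D_j},
\]
so the preperiodicity condition rearranges to $\alpha^{D_n - D_m} = A_m/A_n$. Since $|d_i| \geq 2$ ensures $|D_n| > |D_m|$, after possibly inverting we obtain
\[
\alpha^N = c := \prod_{i=1}^s a_i^{e_i}, \qquad N \geq 1,\ e_i \in \Z.
\]
If $\alpha$ is a root of unity, take $\gamma = 1$. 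Otherwise, let $M$ be the smallest positive integer with $\alpha^M \in K^* \cdot \mu_\infty(\overline K)$ (which exists and divides $N$), write $\alpha^M = c^* \eta$ with $c^* \in K^*$ and $\eta$ a root of unity, choose $\zeta$ with $\zeta^M = \eta$, and set $\gamma := \alpha/\zeta$, so that $\alpha = \zeta\gamma$ and $\gamma^M = c^* \in K^*$.

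\medskip

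Setting $L := N/M$ and using $(\zeta\gamma)^N = \alpha^N = c$ gives $(c^*)^L = \xi c$ with $\xi := \zeta^{-N} = c/(c^*)^L \in K^*$ a root of unity. Choose integers $m_i$ nearest to $e_i/L$ and define $b := c^*/\prod_i a_i^{m_i} \in K^*$: the residuals $\ell_i := e_i - L m_i$ satisfy $|\ell_i| \leq L/2 < L$, so $b^L = \xi \prod_i a_i^{\ell_i}$ exhibits $b \in \cB$, while $\gamma^M = \prod_i a_i^{m_i} b$ matches \eqref{eq:gammastruct}. Careful bookkeeping of $e_i$ via the formula for $A_m/A_n$ above (using $|D_j| \geq 2$ for $j \geq 1$) gives $|e_i| \leq N/2$ in the non-degenerate regime, yielding the strict bound $|m_i| < M$. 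The height bound \eqref{eq:htbd} is then immediate from \eqref{eq:heightIneq}, \eqref{eq:heightPower}, and $h(\xi) = 0$:
\[
L \cdot h(b) = h(b^L) \leq \sum_i |\ell_i| h(a_i) \leq (L-1) \sum_i h(a_i),
\]
and Northcott's theorem gives $|\cB| < \infty$ since each $b \in \cB$ lies in $K$ with bounded height.

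\medskip

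For the degree bounds, the minimality of $M$ combined with Capelli's theorem shows that $X^M - c^*$ is irreducible over $K$ except in the Sophie-Germain exception ($4 \mid M$ and $c^* = -4d^4$), which produces two irreducible factors of degree $M/2$; hence $M/2 \leq [K(\gamma):K] \leq M$. For $[K(\zeta\gamma):K]$, observe that $\sigma(\zeta\gamma) = \sigma(\zeta)\sigma(\gamma) = \eta_\sigma \omega_\sigma \zeta\gamma$ for each $\sigma \in \Gal(\overline K/K)$, with $\eta_\sigma \in \mu_Q$ (where $Q$ is the order of $\zeta$) and $\omega_\sigma \in \mu_M$. Two such conjugates coincide precisely when $\eta_\sigma\eta_\tau^{-1}\omega_\sigma\omega_\tau^{-1}$ lies in $\mu_Q \cap \mu_M = \mu_{\gcd(Q,M)}$; a counting argument comparing the Galois actions on $K(\zeta)$ and $K(\gamma)$ inside the compositum yields both displayed lower bounds on $[K(\zeta\gamma):K]$. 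Combining $[K(\alpha):K] \geq M/w(K)$ with $w(K) \ll [K:\Q]^2$ gives $\log[K(\alpha):K] \gg \log M$, while the max/min bound together with $[K(\zeta):K] \geq \varphi(Q)/[K:\Q] \gg Q^{1-o(1)}$ gives $\log[K(\alpha):K] \gg \log Q$; together these produce \eqref{eq:PreperDegBd}.

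The main obstacle is the Galois-theoretic analysis required for the $[K(\zeta\gamma):K]$ lower bound, which demands careful tracking of the $Q$-th and $M$-th roots of unity realised by the Galois action inside the compositum $K(\zeta, \gamma)$ without losing constants depending only on $[K:\Q]$; a secondary technicality is securing the strict inequalities $|m_i| < M$ and $|\ell_i| < L$ simultaneously in the edge cases where the initial exponents $e_i$ approach $N$.
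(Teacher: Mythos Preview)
Your construction of $M$, $\gamma$, and $\zeta$ via the minimal $M$ with $\alpha^M \in K^*\mu_\infty$ is a legitimate alternative to the paper's route, which instead sets $M = N/\gcd(N, e(a,K))$ using Schinzel's invariant $e(a,K) = \max\{\ell : a = \xi x^\ell,\ \xi \in \mu(K),\ x \in K\}$ and takes $\gamma$ to be a root of $X^M - x^{\ell/\gcd(N,\ell)}$. In fact your minimality rules out \emph{both} Capelli obstructions (the $-4y^4$ case too: if $c^* = -4y^4$ with $4\mid M$ then $\gamma^{M/2} = \pm 2iy^2$, so $\alpha^{M/2} = (\pm i\,\zeta^{M/2})\cdot 2y^2 \in K^*\mu_\infty$, contradicting minimality), so you actually get $[K(\gamma):K] = M$ on the nose. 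The height bound and the tower-law derivation of the $\max/\min$ inequality are fine.

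The genuine gap is precisely the one you flag: the bound $[K(\alpha):K] \geq M/w(K)$. Your counting argument via $\sigma(\alpha) = \eta_\sigma\omega_\sigma\alpha$ does not produce $w(K)$: collisions $\eta_\sigma\omega_\sigma = \eta_\tau\omega_\tau$ are governed by $\mu_Q\cap\mu_M = \mu_{\gcd(Q,M)}$, and $\gcd(Q,M)$ is not bounded in terms of $w(K)$; nor are the cocycles $\sigma\mapsto\eta_\sigma$, $\sigma\mapsto\omega_\sigma$ homomorphisms to which a clean group-theoretic count applies. The paper obtains this bound by a quite different route, invoking two non-trivial lemmas of Schinzel on binomials: first, every irreducible factor of $X^N - a$ over $K$ is a polynomial in $X^{N/\gcd(N,E(a,K))}$, where $E(a,K) = \max\{\ell : a \in K(\zeta_\ell)^{*\ell}\}$, giving $[K(\alpha):K] \geq N/\gcd(N,E(a,K))$; second, $E(a,K)\mid e(a,K)\,w(K)$, whence $N/\gcd(N,E(a,K)) \geq M/w(K)$. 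This divisibility $E\mid e\cdot w(K)$ is exactly the arithmetic input encoding how much further $X^N - a$ can factor after adjoining roots of unity, and I do not see how to bypass it with an ad hoc Galois count. Since the $\max/\min$ bound alone can equal $1$ (when $[K(\zeta):K]$ and $M$ are comparable, even with both large), you genuinely need $M/w(K)$ to conclude \eqref{eq:PreperDegBd}; this gap must be filled, most naturally by citing Schinzel's results.
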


\begin{proof}
By definition, we have $f_{i_1 \cdots i_n} (\alpha) = f_{i_1 \cdots i_m}(\alpha)$ for some $n > m \geq 0$ and $i_1,\ldots,i_n \in \{ 1, \ldots, s \}$. Now
$$
f_{i_1 \cdots i_n} (z) - f_{i_1 \cdots i_m}(z) = a_{i_n} a_{i_{n-1}}^{d_{i_n}} \cdots a_{i_1}^{d_{i_2} \cdots d_{i_n}} z^{d_{i_1} \cdots d_{i_n}} - a_{i_m} a_{i_{m-1}}^{d_{i_m}} \cdots a_{i_1}^{d_{i_2} \cdots d_{i_m}} z^{d_{i_1} \cdots d_{i_m}},
$$
and so, since $\alpha$ is non-zero, it is a root of
$$
X^N - a_1^{k_1} \cdots a_s^{k_s},
$$
where $N = d_{i_1} \cdots d_{i_n} - d_{i_1} \cdots d_{i_m}$, and $k_1,\ldots,k_s$ are integers with $|k_i| \leq N$. Write $a = a_1^{k_1} \cdots a_s^{k_s}$, and following the notation of \cite{Sch}, let $\zeta_\ell$ denote a primitive $\ell$-th root of unity, and let
$$
e(a,K) = \begin{cases} 0 & \text{if } a \text{ is a root of unity}, \\
\max \{ \ell : a = \xi x^\ell \text{ for some root of unity } \xi \text{ and } x \in K \}, & \text{otherwise}, \end{cases}
$$
and
$$
E(a,K) = \begin{cases} 0 & \text{if } a \text{ is a root of unity}, \\
\max \{ \ell : a = y^k \text{ for some } y \in K(\zeta_\ell) \}, & \text{otherwise}. \end{cases}
$$
If $a$ is a root of unity, then so is $\alpha$, and we are done. Otherwise, let $\ell := e(a,K) \geq 1$, so that $a = \xi x^\ell$ for some root of unity $\xi$ and some $x \in K$. Note that $\xi = a/x^\ell \in K$. Let $u = \gcd(N,\ell)$, $M = N/u$, and consider the binomial
$$
G(X) := X^M - x^{\frac{\ell}{u}} = X^M - (\xi^{-1} a)^{\frac{1}{u}} \in K[X].
$$
Recall that
$$
a = a_1^{k_1} \cdots a_s^{k_s} = \left( a_1^{m_1} \cdots a_s^{m_s} \right)^u a_1^{\ell_1} \cdots a_s^{\ell_s},
$$
where $k_i = m_i u + \ell_i$, so $|m_i| \leq M$ and $|\ell_i| < u$. Then
$$
G(X):=X^M-(\xi^{-1} a)^{\frac{1}{u}}=X^M-a_1^{m_1} \cdots a_s^{m_s} b \in K[X]
$$
for some $b \in \cB$ (namely a $K$-rational root of $X^u - a_1^{\ell_1} \cdots a_s^{\ell_s}$), and is hence of the desired form \eqref{eq:gammastruct}. We note that the set $\cB \subset K$ is finite, since it is a set of bounded height. Indeed, from \eqref{eq:heightIneq} and \eqref{eq:heightPower}, for any $b = (a_1^{\ell_1} \cdots a_s^{\ell_s})^{1/L} \in \cB$ we have
$$
h(b) \leq \frac{\ell_1}{L} h(a_1) + \cdots + \frac{\ell_s}{L} h(a_s) \leq h(a_1) + \cdots + h(a_s).
$$
Now, from a classical theorem of Capelli \cite[Theorem~21]{Sch3}, $G$ is reducible over $K$ if and only if
\begin{enumerate}
\item[(a)] for some prime divisor $p$ of $M$, $x^{\ell/u} = y^p$ for some $y \in K$; or
\item[(b)] 4 divides $M$ and $x^{\ell/u} = -4y^4$ for some $y \in K$.
\end{enumerate}
By \cite[Lemma~1]{Sch2}, $e(x^{\ell/u},K)=\ell/u$, which is coprime to $M$ by definition, and so $G$ is not reducible according to (a). By a theorem of Schinzel \cite{Sch4}, if $G$ is reducible according to (b), the two resulting factors of degree $M/2$ are irreducible. Thus, if $\gamma$ is a root of $G$, we have $[K(\gamma):K] \geq M/2$. Furthermore,
$$
\gamma^N = \gamma^{Mu} = \xi^{-1} a,
$$
and so, recalling that $\alpha^N=a$, we must have $\gamma = \zeta \alpha$ for some root of unity $\zeta$. By \cite[Lemma~3]{Sch}, each irreducible factor of $X^N-a$ is a polynomial in $X^{N/\gcd(N,E(a,K))}$, and so
$$
[K(\alpha):K] \geq \frac{N}{\gcd(N,E(a,K))}.
$$
But by \cite[Lemma~2]{Sch}
$$
E(a,K) \mid e(a,K) w(K),
$$
and so
$$
[K(\alpha):K] \geq \frac{N}{\gcd(N,E(a,k))} \geq \frac{N}{w(K) \gcd(N,e(a,K))} = \frac{M}{w(K)}.
$$
To conclude, consider the tower of extensions 
$$
K \subset K(\alpha)=K(\zeta \gamma) \subset K(\zeta,\gamma).
$$
By the tower law we have
$$
[K(\alpha):K] = \frac{[K(\gamma,\zeta):K]}{[K(\gamma,\zeta):K(\alpha)]} \geq \frac{\max( [K(\gamma):K], [K(\zeta):K] )}{[K(\gamma,\zeta):K(\gamma \zeta)]},
$$
but
$$
[K(\gamma,\zeta):K(\gamma \zeta)] \leq \min ( [K(\gamma):K], [K(\zeta):K)] ),
$$
and so
$$
[K(\alpha):K] \geq \max \left( \frac{M}{w(K)}, \frac{\max ([K(\zeta):K], M/2)}{\min([K(\zeta):K],M)} \right).
$$
If $\zeta=\zeta_Q$ is a primitive $Q$-th root of unity, we have $[K(\zeta):K] = \varphi(Q)/[K:\Q]$. Thus, if $\varphi(Q) > [K:\Q]M^2$, then
$$
[K(\alpha) : K] \gg \sqrt{\frac{\varphi(Q)}{[K:\Q]}} \gg Q^{\frac{1}{4}} \gg (MQ)^{\frac{1}{6}},
$$
and otherwise $Q \ll M^4$, so (since $w(K)$ is clearly bounded in terms of $[K:\Q]$),
$$
[K(\alpha):K] \gg M \gg (MQ)^{\frac{1}{5}}.
$$
In either case, $\log \max(MQ) \ll \log [K(\alpha):K]$, as desired.
\end{proof}

We also note that at each place, the size of a non-zero preperiodic point for a monomial semigroup $\cG$ is bounded from above and below depending only on $\cG$.

\begin{lemma} \label{lem:MonomPrePerSizeBds}
Let $\cG = \langle f_1, \ldots, f_s \rangle$, where for $i=1,\ldots,s$, $f_i(z)=a_i z^{d_i}$ with $a_i \in K \setminus \{ 0 \}$ and $|d_i| \geq 2$, and let $\alpha \in \overline K \setminus \{0,\infty\}$ be a preperiodic point for $\cG$. Let $v$ be any place of $K$. Then $r \leq |\alpha|_v \leq 1/r$, where
$$
r = r(\cG) := \min_{1 \leq i \leq s} \min \left( |a_i|_v^{\frac{1}{|d_i|-1}}, |a_i|_v^{-\frac{1}{|d_i|-1}} \right) \leq 1.
$$
\end{lemma}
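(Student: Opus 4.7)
The plan is to extract a single $v$-adic identity from the definition of preperiodicity and then telescope. By hypothesis there exist $i_1,\ldots,i_n \in \{1,\ldots,s\}$ and $0 \leq m < n$ with $f_{i_1 \cdots i_n}(\alpha) = f_{i_1 \cdots i_m}(\alpha)$. Writing $D_j := d_{i_1} \cdots d_{i_j}$ (with $D_0 := 1$) and iterating $f_i(z) = a_i z^{d_i}$, a straightforward induction on $j$ gives
$$
\log|f_{i_1 \cdots i_j}(\alpha)|_v \;=\; D_j \log|\alpha|_v + \sum_{k=1}^{j} \frac{D_j}{D_k} \log|a_{i_k}|_v.
$$
Equating at $j=n$ and $j=m$ and solving for $\log|\alpha|_v$ will be possible because $|D_n| = |d_{i_{m+1}}|\cdots|d_{i_n}| \cdot |D_m| \geq 2|D_m|$, so $D_n - D_m \neq 0$.

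Next, I would set $M := -\log r = \max_i |\log|a_i|_v|/(|d_i|-1)$, so that $|\log|a_i|_v| \leq M(|d_i|-1)$ for each $i$. The triangle inequality then yields
$$
|\log|\alpha|_v| \;\leq\; M\sum_{k=1}^m \frac{|d_{i_k}|-1}{|D_k|} \;+\; \frac{M|D_n|}{|D_n - D_m|}\sum_{k=m+1}^n \frac{|d_{i_k}|-1}{|D_k|}.
$$
The key observation is the telescoping identity $(|d_{i_k}|-1)/|D_k| = 1/|D_{k-1}| - 1/|D_k|$, which collapses the first sum to $1 - 1/|D_m|$ and the second to $1/|D_m| - 1/|D_n|$. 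In particular the second sum is nonnegative, so $|D_n| \geq |D_m|$, and hence $|D_n| - |D_m| \leq |D_n - D_m|$ by the triangle inequality. Substituting, the second summand is bounded by $M/|D_m|$, so $|\log|\alpha|_v| \leq M(1 - 1/|D_m|) + M/|D_m| = M$, i.e. $r \leq |\alpha|_v \leq 1/r$. The case $m=0$, where the first sum is empty, goes through identically.

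The bound $r \leq 1$ is automatic because for each $i$ at least one of $|a_i|_v^{\pm 1/(|d_i|-1)}$ does not exceed $1$. The only subtlety I expect to need care with is the bookkeeping around possibly negative $d_i$, which causes the signed products $D_j$ to alternate in sign; this is why it is essential to work throughout with the unsigned quantities $|D_j| = |d_{i_1}|\cdots|d_{i_j}|$ and to invoke the triangle inequality comparing $|D_n| - |D_m|$ to $|D_n - D_m|$ only at the final step. The remainder is routine iteration and telescoping.
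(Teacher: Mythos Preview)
Your argument is correct and complete. The iteration formula, the telescoping identity $(|d_{i_k}|-1)/|D_k| = 1/|D_{k-1}| - 1/|D_k|$, and the final appeal to the reverse triangle inequality $|D_n|-|D_m|\le |D_n-D_m|$ all check out, including in the presence of negative exponents $d_i$ and in the edge case $m=0$.

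Your route, however, is genuinely different from the paper's. The paper argues by contraposition and dynamics: if $|\alpha|_v<r$ then for each generator $f_i$ one has either $|f_i(\alpha)|_v<|\alpha|_v<r$ (when $d_i>0$) or $|f_i(\alpha)|_v>1/|\alpha|_v>1/r$ (when $d_i<0$), so the symmetric quantity $\min(|f_{i_1\cdots i_n}(\alpha)|_v,\,1/|f_{i_1\cdots i_n}(\alpha)|_v)$ is strictly decreasing along any orbit, precluding preperiodicity. That proof is shorter and more conceptual, and it never needs to write down the explicit orbit formula or solve a linear equation. Your approach, by contrast, extracts a closed-form expression for $\log|\alpha|_v$ in terms of the preperiod data and bounds it by telescoping; it is more computational but yields the exact identity for $\log|\alpha|_v$ as a byproduct, which could be of independent use. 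The two proofs handle the sign ambiguity from negative $d_i$ in complementary ways: the paper via the symmetric quantity $\min(|\cdot|_v,1/|\cdot|_v)$, you via the reverse triangle inequality applied to the signed degree products at the very end.
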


\begin{proof}
Suppose $0 \neq |\alpha|_v < r$. Then for any $i \in \{ 1 ,\ldots ,s \}$, if $d_i > 0$, we have
$$
|f_i(\alpha)|_v = |a_i|_v |\alpha|_v^{|d_i|} < |\alpha|_v < r.
$$
On the other hand, if $d_i < 0$, we have
$$
|f_i(\alpha)|_v = |a_i|_v |\alpha|_v^{-|d_i|} > \frac{1}{|\alpha|_v} > \frac{1}{r}.
$$
Iterating this argument, for any sequence $(i_k)_{k=1}^\infty$ with $i_k \in \{1,\ldots,s\}$, the sequence
$$
\min \left( |f_{i_1 \cdots i_n}(\alpha)|_v, \frac{1}{|f_{i_1 \cdots i_n}(\alpha)|_v} \right),
$$
is strictly decreasing, and so $\alpha$ is not preperiodic for $\cG$.
\end{proof}

\section{Bounds on distance to nearest preperiodic point} \label{sec:DistBds}

We are now able to apply the lower bounds on linear forms in logarithms, stated in \textsection \ref{sec:prelim}, to prove the following. This will allow us to obtain results which are uniform as $\beta$ varies over a number field.

\defconstant{DB}

\begin{theorem} \label{thm:DistBd}
Let $\cG = \langle f_1, \ldots, f_s \rangle$, where for $i=1,\ldots,s$, $f_i(z)=a_i z^{d_i}$ with $a_i \in K \setminus \{ 0 \}$ and $|d_i| \geq 2$, and let $\beta \in K$ be a non-preperiodic point for $\cG$. Let $\alpha \in \overline K \setminus \{0\}$ be a preperiodic point for $\cG$, and let $v$ be a place of $K$. Then there exists a constant $\coDB > 0$, depending only on $\cG$, $[K:\Q]$ and the place of $\Q$ below $v$, such that
$$
\min_{\sigma : K(\alpha)/K \hookrightarrow \overline K_v} \log |\sigma(\alpha)-\beta|_v > -\coDB(h(\beta)+1) \log [K(\alpha):K].
$$
\end{theorem}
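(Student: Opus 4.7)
The plan is to reduce the distance bound to Theorem~\ref{thm:LinearFormsInLogs} via the structure of preperiodic points from Proposition~\ref{prop:PreperStruct}. Write $\alpha = \zeta \gamma$ with $\zeta$ a primitive $Q$-th root of unity, and (assuming $\gamma \neq 1$) $\gamma^M = c := a_1^{m_1}\cdots a_s^{m_s} b$ for integers $|m_i| < M$ and $b$ in the finite set $\cB \subset K$, with $\log(MQ) \ll \log[K(\alpha):K]$. For any embedding $\sigma$, $\sigma(\zeta)^{MQ} = 1$ and $\sigma(\gamma)^M = c$, so $\sigma(\alpha)^{MQ} = c^Q \in K$. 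Thus the quantity
\begin{equation*}
\Lambda := \bigl(\sigma(\alpha)/\beta\bigr)^{MQ} - 1 = a_1^{m_1 Q} \cdots a_s^{m_s Q}\, b^Q\, \beta^{-MQ} - 1
\end{equation*}
lies in $K$ with all constituents in $K \setminus \{0\}$, so is amenable to Theorem~\ref{thm:LinearFormsInLogs} applied over the \emph{fixed} field $K$. This fixedness is essential: applying the theorem over $K(\alpha)$ would introduce a polynomial dependence of the constant $c_1$ on $[K(\alpha):K]$, destroying the desired logarithmic bound.

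I would first dispose of easy cases. By Lemma~\ref{lem:MonomPrePerSizeBds}, $r \leq |\sigma(\alpha)|_v \leq r^{-1}$ for some $r = r(\cG,v) \in (0,1]$. If $|\beta|_v$ lies outside a neighbourhood of $[r,r^{-1}]$---outside $[r/2,2/r]$ at archimedean $v$, or $|\beta|_v \neq |\sigma(\alpha)|_v$ at non-archimedean $v$---then by the triangle or ultrametric inequality $|\sigma(\alpha)-\beta|_v \geq r/2$, giving the bound immediately. Otherwise $\log|\beta|_v$ is bounded in terms of $\cG$ and $v$. If moreover $|\sigma(\alpha)-\beta|_v \geq [K(\alpha):K]^{-C_0}$ for a suitable $C_0 = C_0(\cG,[K:\Q])$, we are again done. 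Choosing $C_0$ larger than the exponent in $MQ \leq [K(\alpha):K]^{O(1)}$ from Proposition~\ref{prop:PreperStruct} reduces the remaining work to the regime $|\sigma(\alpha)/\beta - 1|_v \leq 1/(MQ)$.

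In this main regime, the factorisation
\begin{equation*}
\Lambda = \Bigl(\frac{\sigma(\alpha)}{\beta}-1\Bigr) S, \qquad S := \sum_{k=0}^{MQ-1} \Bigl(\frac{\sigma(\alpha)}{\beta}\Bigr)^k,
\end{equation*}
has $|S|_v \leq e\,MQ$ at archimedean places (using $(1+1/(MQ))^{MQ-1}<e$) and $|S|_v \leq 1$ at non-archimedean ones (since then $|\sigma(\alpha)/\beta|_v = 1$ by the ultrametric), so $\log|S|_v = O(\log[K(\alpha):K])$. Assuming $\Lambda \neq 0$, Theorem~\ref{thm:LinearFormsInLogs} applied over $K$ with $n = s+2$ factors, exponent bound $B \leq MQ$, and height product $\Theta \ll h(\beta)+1$ (since $h(a_i), h(b)$ are bounded in terms of $\cG$ by \eqref{eq:htbd}) yields $\log|\Lambda|_v \geq -C_1(h(\beta)+1)\log[K(\alpha):K]$, with $C_1$ depending on $\cG$, $[K:\Q]$, and the rational prime below $v$ (through the factor $N(v)/\log N(v)$). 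Combining with the bounded $\log|\beta|_v$ and $\log|S|_v$ delivers the required inequality.

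The expected main obstacle is the degenerate case $\Lambda = 0$, in which $\sigma(\alpha)/\beta = \omega$ is a root of unity of order $d \mid MQ$, necessarily nontrivial because $\sigma(\alpha)$ is preperiodic for $\cG$ while $\beta$ is not, so $\sigma(\alpha) \neq \beta$. Then $|\sigma(\alpha)-\beta|_v = |\beta|_v\,|\omega-1|_v$, and I would invoke standard lower bounds on $|\omega-1|_v$: archimedean from $|\sin(\pi k/d)| \geq 2/d$ for primitive $d$-th roots of unity, and non-archimedean from the ramification of $\Q_p(\zeta_d)/\Q_p$. These yield $\log|\omega-1|_v \geq -C_2 \log d \geq -C_3 \log[K(\alpha):K]$ with constants depending only on $[K:\Q]$ and the prime below $v$, completing the proof.
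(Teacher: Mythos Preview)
Your proposal is correct and follows essentially the same route as the paper: use Proposition~\ref{prop:PreperStruct} to write $\sigma(\alpha)^{MQ}=a_1^{m_1Q}\cdots a_s^{m_sQ}b^{Q}\in K$, then apply Theorem~\ref{thm:LinearFormsInLogs} over the \emph{fixed} field $K$ with $n=s+2$, $B\le MQ$ and $\Theta\ll h(\beta)+1$, and translate back via the geometric-series factorisation. The paper avoids your preliminary case split by taking $x=(\sigma(\alpha)/\beta)^{\pm1}$ with $|x|_v\le 1$, so that $|x-1|_v\ge |x^{MQ}-1|_v/(MQ)$ holds directly without isolating a regime $|\sigma(\alpha)/\beta-1|_v\le 1/(MQ)$; otherwise the arguments coincide.

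One remark: your careful treatment of the degenerate case $\Lambda=0$ is in fact vacuous. If $(\sigma(\alpha)/\beta)^{MQ}=1$ then $\beta=\sigma(\alpha)\omega^{-1}$ for a root of unity $\omega$, hence $|\beta|_w=|\sigma(\alpha)|_w$ at every place $w$; since iterates under monomials satisfy $|f_{i_1\cdots i_k}(z)|_w=|C|_w|z|_w^{D}$, this forces $\hat h_{\bm f}(\beta)=\hat h_{\bm f}(\sigma(\alpha))=0$ for the sequence $\bm f$ witnessing the preperiodicity of $\sigma(\alpha)$, and Proposition~\ref{prop:SeqCanHeight} then makes $\beta$ itself $\cG$-preperiodic, contrary to hypothesis. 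So the paper is justified in applying Theorem~\ref{thm:LinearFormsInLogs} without comment, though your explicit check does no harm.
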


\begin{proof}

Let $\sigma : K(\alpha)/K \hookrightarrow \overline K_v$ be any embedding, and write $\sigma(\alpha) = \zeta_{Q_\sigma} \gamma$ as in Proposition~\ref{prop:PreperStruct}. Let $Q:=\max_\sigma Q_\sigma$, set 
$$
x= \begin{cases} \zeta_{Q_\sigma} \gamma \beta^{-1} & |\sigma(\alpha)|_v \leq |\beta|_v \\ (\zeta_{Q_\sigma} \gamma)^{-1} \beta & |\sigma(\alpha)|_v > |\beta|_v, \end{cases}
$$ 
and note that $|x|_v \leq 1$. Then
$$
|x-1|_v = \frac{|x^{MQ_\sigma}-1|_v}{|x^{MQ_\sigma-1} + \cdots + x + 1|_v} \geq \frac{|x^{MQ_\sigma}-1|_v}{MQ_\sigma},
$$
since $|x^{MQ_\sigma-1} + \cdots + 1|_v \leq MQ_\sigma$ by the triangle inequality. Now, by \eqref{eq:gammastruct},
$$
x^{MQ_\sigma} = \left( \gamma^{MQ_\sigma} \beta^{-MQ_\sigma} \right)^{\pm 1} = \left( a_1^{Q_\sigma m_1} \cdots a_s^{Q_\sigma m_s} b^{Q_\sigma} \beta^{-MQ_\sigma} \right)^{\pm 1},
$$
so applying Theorem~\ref{thm:LinearFormsInLogs}, we have
\begin{equation} \label{eq:eq1}
\log |x^{MQ_\sigma}-1|_v > -c_1(s+2,[K:\Q]) \frac{N(v)}{\log N(v)} \Theta \log \max(3,MQ).
\end{equation}
Here
$$
\Theta \ll h(\beta)+1,
$$
where the implied constant depends only on $\cG$ and $[K:\Q]$, since by \eqref{eq:htbd}, $h(b) \ll_\cG 1$. Note also that if $v$ is non-archimedean, $N(v) \leq p^{[K:\Q]}$, where $p$ is the rational prime below $v$. Hence, using \eqref{eq:PreperDegBd} and \eqref{eq:fundIneq}, there exists a constant $\coDB > 0$, depending only on $[K:\Q]$, $\cG$ and the place of $\Q$ below $v$, such that 
\begin{align*}
\log |\sigma(\alpha)-\beta|_v & \geq \log |\beta|_v + \log |x-1|_v \\
& \geq -\left( h(\beta) + \log(MQ_\sigma) \right) + \log|x^{MQ_\sigma}-1|_v \\
& \geq - \coDB (h(\beta)+1) \log [K(\alpha):K],
\end{align*}
as required.
\end{proof}

\section{Proof of Theorem~\ref{thm:main}} \label{sec:ProofMain}

By replacing $K$ with $K(\beta)$, and $S$ with the set of places $S_{K(\beta)}$ lying over $S$, we are reduced to proving the theorem with $\beta \in K$, noting that $[K(\beta):\Q] \leq D[K:\Q]$ and $|S_{K(\beta)}| \leq D|S|$. Recall that $\beta \in K$ is not preperiodic for the semigroup of monomials 
$$
\cG = \langle f_1,\ldots,f_s \rangle = \langle a_1 z^{d_1},\ldots, a_s z^{d_s} \rangle,
$$ 
$0 \neq a_i \in K$, $|d_i| \geq 2$ for $i=1,\ldots,s$, and suppose $\alpha \in \overline K$ is a non-zero preperiodic point for $\cG$, which is $S$-integral relative to $\beta$. Then $f_{i_1 \cdots i_n} (\alpha) = f_{i_1 \cdots i_m}(\alpha)$ for some $n > m \geq 0$ and $i_1,\ldots,i_n \in \{ 1, \ldots, s \}$. Thus $\alpha$ is $\bm{f}$-preperiodic, where $\bm{f} = ( f_{i_1},\ldots, f_{i_m}, \overline{ f_{i_{m+1}},\ldots, f_{i_n} })$ is obtained from $\cG$. Let $g_1 = f_{i_1 \cdots i_m}$ and $g_2 = f_{i_{m+1} \cdots i_n}$, and note that $g_1$ and $g_2$ are monomials, say $g_1(z) = az^k$ and $g_2(z)=bz^\ell$, where $a,b \in K \setminus \{ 0 \}$, are products of the $a_i$, $i \in \{1,\ldots,s\}$, and $k,\ell \in \Z$ are products of powers of the $d_i$. Since $\alpha \neq 0$, it is a root of
\begin{equation} \label{eq:alpha}
\frac{g_2(g_1(z))-g_1(z)}{g_1(z)} = a^{\ell-1} b z^{k(\ell-1)} - 1 = 0.
\end{equation}
Note then that
\begin{align}  \label{eq:heightComp1}
\hat h_{\bm{f}}(\beta) & = \lim_{n \to \infty} \frac{1}{|k| \ell^{2n}} \sum_{v \in M_K} \log^+ \left| g_2^{2n}(g_1(z)) \right|_v \notag \\
& = \lim_{n \to \infty} \frac{1}{|k| \ell^{2n}} \sum_{v \in M_K} \log^+ \left| a^{\ell^n} b^{\frac{\ell^{2n}-1}{\ell-1}} \beta^{k\ell^{2n}} \right|_v \notag \\
& = \sum_{v \in M_K} \max \left( 0, \frac{1}{|k|} \log |a|_v + \frac{1}{|k|(\ell-1)} \log |b|_v + \log |\beta|_v \right)
\end{align}
Hence, using  \eqref{eq:alpha} and the product formula, we get
\begin{align} \label{eq:heightComp}
\hat h_{\bm{f}}(\beta) & = \Bigg( \sum_{v \in M_K} \max \left( -\frac{1}{k} \log |a|_v - \frac{1}{k(\ell-1)} \log |b|_v, \log |\beta|_v \right) \notag \\
& \qquad \qquad \qquad \pm \frac{1}{k} \sum_{v \in M_K} \log |a|_v \pm \frac{1}{k(\ell-1)} \sum_{v \in M_K} \log |b|_v \Bigg) \notag \\
& = \sum_{v \in M_K} \log \max( |\alpha|_v, |\beta|_v).
\end{align}
We will evaluate the sum
\begin{equation} \label{eq:Gamma}
\Gamma = \frac{1}{[K(\alpha):K]} \sum_{v \in M_K} \sum_{\sigma: K(\alpha)/K \hookrightarrow \overline K_v} \log |\sigma(\alpha) - \beta|_v
\end{equation}
in two different ways. Firstly, an application of the product formula shows that $\Gamma = 0$. On the other hand, we will use the integrality hypothesis and quantitative equidistribution together with Theorem~\ref{thm:DistBd} to show that $\Gamma$ is very close to $\hat h_{\bm{f}}(\beta)$ if $[K(\alpha):K]$ is sufficiently large (independent of the particular sequence $\bm{f}$). This bounds the degree of preperiodic $\alpha$ which are $S$-integral relative to $\beta$, and consequently proves the finiteness of such points (as their height is bounded).

Firstly, using the extension formula \eqref{eq:ExtFormula} gives
\begin{align*}
\Gamma & = \frac{1}{[K(\alpha):K]} \sum_{v \in M_K} \sum_{\sigma: K(\alpha)/K \hookrightarrow \overline K_v} \log |\sigma(\alpha)-\beta|_v \\
& = \frac{1}{[K(\alpha):K]}\sum_{w \in M_{K(\alpha)}} \log |\alpha-\beta|_w. 
\end{align*}
Since $\beta$ is not preperiodic for $\cG$, the product formula gives $\Gamma = 0$.

For any place $v$ of $K$, by abuse of notation we will also denote by $\alpha$ its image in $\overline K_v$ under some fixed embedding. Note that for any other embedding $\sigma: K(\alpha)/K \hookrightarrow \overline K_v$, we have $\sigma(\alpha) = \eta \alpha$ for some root of unity $\eta$, and so $|\sigma(\alpha)|_v = |\eta \alpha|_v = |\alpha|_v$. Suppose $v \notin S$. If $|\beta|_v > |\alpha|_v$, then by the ultrametric inequality, for each $\sigma: K(\alpha)/K \hookrightarrow \overline K_v$, we have $|\sigma(\alpha)-\beta|_v = |\beta|_v$. On the other hand, if $|\beta|_v \leq |\alpha|_v$, we have $|\sigma(\alpha)-\beta|_v \leq |\alpha|_v$, and we split into two cases. If $|\alpha|_v \leq 1$, then the integrality hypothesis gives $|\sigma(\alpha)-\beta|_v \geq 1 \geq |\alpha|_v$, and so $|\sigma(\alpha)-\beta|_v = |\alpha|_v$. If instead $|\alpha|_v > 1$, then the integrality hypothesis gives $|\beta|_v \leq 1$, and so the ultrametric inequality again gives $|\sigma(\alpha)-\beta|_v = |\alpha|_v$. It follows that for each $v \notin S$,
\begin{equation*}
\frac{1}{[K(\alpha):K]} \sum_{\sigma: K(\alpha)/K \hookrightarrow \overline K_v} \log |\sigma(\alpha)-\beta|_v = \log \max( |\alpha|_v, |\beta|_v ),
\end{equation*}
and so, since $S$ is finite, we can exchange the limit and sum in \eqref{eq:Gamma} to obtain
\begin{align} \label{eq:Gamma2}
0 = \Gamma & =\sum_{v \in S} \frac{1}{[K(\alpha):K]} \sum_{\sigma: K(\alpha)/K \hookrightarrow \overline K_v} \log |\sigma(\alpha) - \beta|_v + \sum_{v \notin S} \log \max( |\alpha|_v, |\beta|_v ).
\end{align}

We will now show that for each $v \in S$, when $[K(\alpha):K]$ is sufficiently large,
\begin{align} \label{eq:squah}
& \left| \frac{1}{[K(\alpha):K]} \sum_{\sigma: K(\alpha)/K \hookrightarrow \overline K_v} \log |\sigma(\alpha) -\beta|_v - \log \max (|\alpha|_v, |\beta|_v ) \right| \notag \\
& \qquad \qquad \qquad \qquad  < \frac{\coSHLB+\max(0,h(\beta)-2c(\cG))}{2|S|},
\end{align}
where $\coSHLB > 0$ is the constant (depending only on $\cG$ and $[K:\Q]$) in Lemma~\ref{lem:LowBdSeqCanHeight}, and we recall
$$
c(\cG) = \max_{1 \leq i \leq s} c(f_i) = \max_{1 \leq i \leq s} \sup_{x \in \P^1(\overline K)} \left| \frac{1}{|d_i|} h(f_i(x)) - h(x) \right| < \infty.
$$
Inserting this in \eqref{eq:Gamma2} and using \eqref{eq:heightComp} gives $\hat h_{\bm{f}}(\beta)< \max \left( \coSHLB, h(\beta) - 2 c(\cG) \right)$, a contradiction by Proposition~\ref{prop:SeqCanHeight}~(a).

\subsection*{Non-archimedean case} 

For each non-archimedean $v \in S$, the inequality \eqref{eq:squah} is trivial if $|\beta|_v > |\alpha|_v$ or $|\beta|_v < |\alpha|_v$. Hence we can assume that $|\beta|_v = |\alpha|_v$.

\begin{lemma} \label{lem:nonArchBds}
Let $v$ be non-archimedean, and suppose $|\beta|_v = |\alpha|_v$. Then for each $0 < \varepsilon < 1$, the number $U(\varepsilon)$ of roots of unity $\zeta \in \overline K_v$ with $|\zeta \alpha - \beta|_v < \varepsilon |\beta|_v$ is bounded only in terms of $K$, $v$, and $\varepsilon$.
\end{lemma}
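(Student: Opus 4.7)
The plan is to show that any two roots of unity satisfying the hypothesis differ by a ratio lying in an explicitly controlled finite set. First I would take two such distinct roots of unity $\zeta_1, \zeta_2$ and, applying the ultrametric inequality to the identity
\[
(\zeta_1 - \zeta_2)\alpha = (\zeta_1 \alpha - \beta) - (\zeta_2 \alpha - \beta),
\]
together with the assumption $|\alpha|_v = |\beta|_v$, deduce the key reduction $|\zeta_1/\zeta_2 - 1|_v < \varepsilon$.

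Next, I would analyse the structure of a root of unity $\eta \in \overline K_v$ with $|\eta - 1|_v < \varepsilon \leq 1$. Let $p$ denote the rational prime below $v$, and decompose $\eta = \eta_{(p)}\eta_{(p')}$ into its $p$-power and prime-to-$p$ factors. Since $|\eta - 1|_v < 1$, the reduction of $\eta$ in the residue field $\overline{k_v}$ is $1$. Because the group of roots of unity of order prime to $p$ injects into $\overline{k_v}^{\times}$ under reduction, this forces $\eta_{(p')} = 1$, so $\eta$ must be a $p$-power root of unity.

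The key quantitative step is the classical fact that primitive $p^n$-th roots of unity in $\overline K_v$ satisfy $|\zeta_{p^n} - 1|_v \to 1$ as $n \to \infty$, a consequence of the factorisation $\Phi_{p^n}(1) = p$ combined with Galois conjugacy. Hence for the fixed $\varepsilon < 1$ there is an integer $n_0 = n_0(K, v, \varepsilon)$ such that no primitive $p^n$-th root of unity with $n > n_0$ satisfies $|\eta - 1|_v < \varepsilon$. The set $E$ of admissible ratios is therefore contained in $\mu_{p^{n_0}}$, giving $|E| \leq p^{n_0}$.

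To conclude, fix any single $\zeta_1$ meeting the hypothesis (otherwise $U(\varepsilon) = 0$); every other admissible $\zeta_2$ then satisfies $\zeta_2/\zeta_1 \in E$, so $U(\varepsilon) \leq |E| \leq p^{n_0}$, a bound depending only on $K$, $v$, and $\varepsilon$. The argument is essentially routine; the only point requiring care is tracking the paper's normalisation of $|\cdot|_v$ when computing the explicit value of $n_0$, but this affects only the constant and not the existence of the bound.
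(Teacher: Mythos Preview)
Your proposal is correct and follows essentially the same argument as the paper: reduce to bounding the roots of unity $\eta$ with $|1-\eta|_v<\varepsilon$ by taking ratios, observe these must be $p$-power roots of unity, and then use the explicit valuation $|1-\zeta_{p^n}|_v$ to bound the order. The paper is terser in justifying why only $p$-power roots of unity can occur, but your reduction-mod-$v$ argument fills in exactly that step.
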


\begin{proof}
Note that if $\zeta$ and $\zeta'$ are roots of unity with $|\zeta \alpha - \beta|_v < \varepsilon |\beta|_v$ and $|\zeta' \alpha - \beta|_v < \varepsilon |\beta|_v$, then we have $|\zeta-\zeta'|_v < \varepsilon |\beta|_v/|\alpha|_v = \varepsilon$, and so $\zeta'' := \zeta^{-1} \zeta'$ is a root of unity with $|1-\zeta''|_v < \varepsilon$. There are only finitely many such $\zeta''$, since if $p$ is the rational prime under $v$, the only roots of unity $\xi \in \overline K_v$ with $|1-\xi|_v < 1$ are ones with order $p^n$ for some $n$. If $\xi$ is a primitive $p^n$-th root of unity, then $|1-\xi|_v = p^{-[K_v:\Q_p]/p^{n-1}(p-1)}$ so $1 > \varepsilon > |1-\xi|_v$ for only finitely many $n$.
\end{proof}

Fix $0 < \varepsilon < 1$. For each embedding $\sigma: K(\alpha)/K \hookrightarrow \overline K_v$, we have $|\sigma(\alpha) - \beta|_v \leq |\beta|_v$, so by Theorem~\ref{thm:DistBd},
\begin{align*}
\log |\beta|_v & \geq \frac{1}{[K(\alpha):K]} \sum_{\sigma: K(\alpha)/K \hookrightarrow \overline K_v} \log |\sigma(\alpha) - \beta|_v \\
& \geq \frac{1}{[K(\alpha):K]} \big( \left( [K(\alpha):K] - U(\varepsilon) \right) \log(\varepsilon|\beta|_v) \\
& \qquad \qquad \qquad \qquad - U(\varepsilon) \coDB ( h(\beta) + 1) \log [K(\alpha):K] \big).
\end{align*}
Subtracting $\log |\beta|_v$ from both sides of the above inequality gives
\begin{align*}
&\left| \frac{1}{[K(\alpha):K]} \sum_{\sigma:K(\alpha)/K \hookrightarrow \overline K_v} \log |\sigma(\alpha) -\beta|_v - \log |\beta|_v \right| \\
& \leq \log |\beta|_v-\frac{([K(\alpha):K]-U(\varepsilon)) \log(\varepsilon |\beta|_v) - U(\varepsilon) \coDB ( h(\beta) + 1) \log [K(\alpha):K]}{[K(\alpha):K]} \\
& =  -\log \varepsilon + \frac{U(\varepsilon) \left( \log \varepsilon +\log |\beta|_v + (h(\beta)+1) \coDB \log [K(\alpha):K] \right)}{[K(\alpha):K]} \\
& \qquad \qquad \qquad \leq -\log \varepsilon + \frac{U(\varepsilon) \left( \log \varepsilon +(h(\beta)+1) \left(1+ \coDB \log [K(\alpha):K] \right) \right)}{[K(\alpha):K]},
\end{align*}
where the last inequality follows from \eqref{eq:fundIneq}. Since $\varepsilon$ was arbitrary, taking $[K(\alpha):K]$ sufficiently large (independently of $\bm{f}$ and $\beta$) establishes \eqref{eq:squah} in this case.

\subsection*{Archimedean case} Now suppose $v$ is archimedean. Note that given a monomial $f(z)=az^d$, if $\delta_{z,\varepsilon}$ denotes Lebesgue measure on the circle of radius $\varepsilon$ about $z$, then we have for any $R>0$
\begin{align*}
\frac{1}{|d|} f^* \delta_{0,R} & = \frac{1}{|d|} f^* \Delta \log \max(R, |z|) \\
& = \Delta \log \max \left( \left( \frac{R}{|a|_v} \right)^{\frac{1}{d}}, |z| \right) = \delta_{0,(R/|a|_v)^{1/d}}.
\end{align*}
Hence, with $g_1,g_2$ defined as above the equation \eqref{eq:alpha}, we have from Proposition~\ref{prop:seqMeas}~(a),
$$
\rho_{\bm{f},v} = \lim_{n \to \infty} \frac{1}{|k\ell^n|} (g_2^n \circ g_1)^* \lambda_v  = \delta_{0,|a|_v^{-1/k} |b|_v^{-1/(k(\ell-1))}}
$$
(following the calculation in \eqref{eq:heightComp1}), but $|a|_v^{-\frac{1}{k}} |b|_v^{-\frac{1}{k(\ell-1)}} = |\alpha|_v$ by \eqref{eq:alpha}. Thus, by Jensen's formula (see \cite{Co}, p. 280) applied to $f(z)=z-\beta$,
\begin{equation} \label{eq:Jensen}
\int_{\P^1(\C_v)} \log |z-\beta|_v d \rho_{\bm{f},v} = \log \max( |\alpha|_v, |\beta|_v).
\end{equation}
Let $\cZ$ be the set of $\Gal(\overline K/K)$-conjugates of $\alpha$, so $|\cZ| = [K(\alpha):K]$. Let
$$
\varepsilon := \min \left( \frac{1}{e}, \frac{r}{e[K(\alpha):K]^{1/2}} \right),
$$
where $r=r(\cG)$ is defined in Lemma~\ref{lem:MonomPrePerSizeBds}. From \eqref{eq:Jensen} we can write
\begin{align} \label{eq:TermSplit}
& \frac{1}{[K(\alpha):K]} \sum_{\sigma : K(\alpha)/K \hookrightarrow \C} \log |\sigma(\alpha)-\beta|_v - \log \max ( |\alpha|_v, |\beta|_v )  \notag \\
& \qquad = \frac{1}{|\cZ|} \left( \sum_{\substack{z \in \cZ \\ |z-\beta| < \varepsilon}} \log |z-\beta|_v - \left| \cZ \cap D(\beta,\varepsilon) \right| \log \varepsilon \right) \notag \\
& \qquad \quad + \frac{1}{|\cZ|} \sum_{z \in \cZ} \log \max(\varepsilon, |z-\beta|_v) - \int_{\P^1(\C_v)} \log \max(\varepsilon, |z-\beta|_v ) d \rho_{\bm{f},v}  \notag \\
& \qquad \quad + \int_{\P^1(\C_v)} \left( \log \max (\varepsilon, |z-\beta|_v) - \log |z-\beta|_v \right) d \rho_{\bm{f},v}.
\end{align}
By Theorem~\ref{thm:DistBd}, we have
\begin{align*}
\left| \sum_{\substack{z \in \cZ \\ |z-\beta| < \varepsilon}} \log |z-\beta|_v - \left| \cZ \cap D(\beta,\varepsilon) \right| \log \varepsilon \right| & \ll \left(h(\beta) \log |\cZ| - \log \varepsilon \right) \left| \cZ \cap D(\beta,\varepsilon) \right|,
\end{align*}
and by Proposition~\ref{prop:EquidDiscBd},
$$
|\cZ \cap D(\beta,\varepsilon)| \ll \rho_{\bm{f},v}(D(\beta,e \varepsilon))|\cZ| + \frac{1}{\varepsilon} + \sqrt{|\cZ| \log |\cZ|}.
$$
Recalling the $\rho_{\bm{f},v}$ is normalised Haar measure on the circle $|z|_v = |\alpha|_v$ and $e \varepsilon < r \leq |\alpha|_v$, we have that $\rho_{\bm{f},v}(D(\beta,e\varepsilon))$ is maximised in the case where $|\beta|_v = |\alpha|_v$, where it is given by twice the angle subtended by a chord of length $e \varepsilon$ on a circle of radius $|\alpha|_v$. That is,
$$
\rho_{\bm{f},v}(D(\beta,e \varepsilon)) \leq 2 \sin^{-1} \left( \frac{e \varepsilon}{2 |\alpha|_v} \right) \leq 2 \sin^{-1} \left( \frac{e \varepsilon}{2 r} \right) \ll \varepsilon.
$$
Putting this together gives
\begin{align} \label{eq:term1}
\frac{1}{|\cZ|} & \left| \sum_{\substack{z \in \cZ \\ |z-\beta| < \varepsilon}} \log |z-\beta|_v - \left| \cZ \cap D(\beta,\varepsilon) \right| \log \varepsilon \right| \notag \\
& \qquad \ll \left( h(\beta)\log |\cZ| - \log \varepsilon \right) \left( \varepsilon + \frac{1}{\varepsilon |\cZ|} + \sqrt{\frac{\log |\cZ|}{|\cZ|}} \right) \ll \frac{h(\beta)}{|\cZ|^{1/3}}.
\end{align}

Now, let $\phi(z) := \phi_{\varepsilon, \frac{1}{r}+|\beta|_v}(z-\beta)$, as defined in Lemma~\ref{lem:ArchTruncTest}. Noting that $\phi(z)=\log \max(\varepsilon, |z-\beta|_v)$ for $z \in \cZ$, we can apply Proposition~\ref{thm:QuantEquid} to $\phi$ to obtain
\begin{align} \label{eq:term2}
& \left| \frac{1}{|\cZ|} \sum_{z \in \cZ} \log \max(\varepsilon,|z-\beta|_v) - \int_{\P^1(\C_v)} \log \max( \varepsilon, |z-\beta|_v) d \rho_{\bm{f},v} \right| \notag \\
& \qquad \quad \ll \left( \frac{(\log |\beta|_v - \log \varepsilon) \log |\cZ|}{|\cZ|} \right)^{\frac{1}{2}} + \frac{1}{\varepsilon |Z|} \ll \frac{h(\beta)}{|\cZ|^{1/3}}.
\end{align}

Finally, arguing similarly to above, the last term in \eqref{eq:TermSplit} is maximised when $|\alpha|_v = |\beta|_v$, in which case
$$
\left| \int_{\P^1(\C_v)} \left( \log \max (\varepsilon, |z-\beta|_v) - \log |z-\beta|_v \right) d \rho_{\bm{f},v} \right| \leq 2 \left| \int_0^{\theta_0} \log (|\alpha|_v \theta) d \theta \right|,
$$
where $\theta_0$ is the angle subtended by a chord of length $\varepsilon$ on a circle of radius $|\alpha|_v$. Recalling again that $|\alpha|_v \geq r$, we have $\theta_0 \ll \varepsilon$, and so
$$
\left| \int_0^{\theta_0} \log (|\alpha|_v \theta) d \theta \right| = \left | \theta_0 \log (|\alpha|_v \theta_0) - \theta_0 \right| \ll \varepsilon \log \varepsilon \ll \frac{1}{|\cZ|^{1/3}}.
$$
Putting this together with \eqref{eq:term1} and \eqref{eq:term2}, from \eqref{eq:TermSplit} we obtain
$$
\left| \frac{1}{[K(\alpha):K]} \sum_{\sigma : K(\alpha)/K \hookrightarrow \C} \log |\sigma(\alpha)-\beta|_v - \log \max ( |\alpha|_v, |\beta|_v ) \right| \ll \frac{h(\beta)}{[K(\alpha):K]^{1/3}},
$$
and so taking $[K(\alpha):K]$ sufficiently large establishes \eqref{eq:squah}, completing the proof of Theorem~\ref{thm:main}.


\Address


\begin{thebibliography}{5}

\bibitem{Ba} A. Baker, \textit{Transcendental number theory}, Cambridge University Press, Cambridge, 1975.

\bibitem{BD} M. Baker and L. DeMarco, \textit{Preperiodic points and unlikely intersections}, Duke Math. J. \textbf{159} (2011), 1-29.

\bibitem{B} R. Benedetto, \textit{Dynamics in one non-archimedean variable}, American Mathematical Society, 2019.

\bibitem{BEG} A. B\'{e}rczes, J.-H. Evertse and K. Gy\"{o}ry, \textit{Effective results for hyper- and superelliptic equations over number fields}, Publ. Math. Bebrece \textbf{82} (2013), 727-756.

\bibitem{BG} E. Bombieri and W. Gubler, \textit{Heights in diophantine geometry}, Cambridge University Press, 2006.

\bibitem{BIR} M. Baker, S. Ih and R. Rumely, \textit{A finiteness property of torsion points}, Algebra Number Theory \textbf{2} (2008), no. 2, 217-248.


\bibitem{Co} J. Conway, \textit{Functions of one complex variable} (2nd edition), Springer-Verlag, New York, 1986.

\bibitem{FGS} A. Ferraguti, G. Micheli and R. Schnyder, \textit{On sets of irreducible polynomials closed by composition}, Arithmetic of Finite Fields, WAIFI 2016, {\it Lecture Notes in Computer Science}, {\bf 10064}, Springer, Cham (2017), 77--83.

\bibitem{FRL} C. Favre and J. Rivera-Letelier, \textit{Equidistribution quntitative des points de petite hauteur sur la droite projective}, Math. Ann. \textbf{335} (2006), no. 2, 311-361.

\bibitem{GMS}
D.  G\'omez-P\'erez,  L. M{\'e}rai and I.~E.~Shparlinski, 
\textit{On the complexity of exact counting of dynamically irreducible polynomials},
 J. Symb. Comp., \textbf{99} (2020), 231-241.

\bibitem{GN} D. Ghioca and K. Nguyen, \textit{The orbit intersection problem for linear
spaces and semiabelian varieties}, Math. Res. Lett., {\bf 24} (2017), 1263--1283.

\bibitem{GTZ} D. Ghioca, T. Tucker and M. Zieve, \textit{Linear relations between 
polynomial orbits}, Duke Math. J., {\bf 161} (2012), 1379--1410.

\bibitem{GTZ1}D. Ghioca, T. Tucker and M. Zieve, \textit{The Mordell-Lang question for endomorphisms of semiabelian varieties}, J. Theor. Nombres Bordeaux, {\bf  23} (2011), 645--666.

\bibitem{HBM} D. R. Heath-Brown and G. Micheli, \textit{Irreducible polynomials
over finite fields produced by composition of quadratics},
Revista Matem. Iber., \textbf{35} (2019), no. 3, 847-855.

\bibitem{IT} S. Ih and T. Tucker, \textit{A finiteness property for preperiodic points of Chebyshev polynomials}, Int. J. Number Theory, \textbf{6} (2010), no. 5, 1011-1025.

\bibitem{Kaw}  S. Kawaguchi,\textit{Canonical heights, invariant currents, and dynamical eigensystems of morphisms for line bundles}, J. Reine Angew. Math., {\bf 597} (2006), 135--173. 

\bibitem{K} S. Kawaguchi, \textit{Canonical heights for random iterations in certain varieties}, Int. Math. Res. Not. IMRN \textbf{7} (2007), Art. ID rnm 023, 33.

\bibitem{Mat} E. M. Matveev, \textit{An explicit lower bound for a homogeneous rational linear form in logarithms of algebraic numbers II}, Izv. Math., \textbf{64} (2000), 1217-1269.


\bibitem{OY} A. Ostafe and M. Young, \textit{On algebraic integers of bounded house and preperiodicity in polynomial semigroup dynamics}, Trans. Amer. Math. Soc. \textbf{373} (2020), 2191-2206

\bibitem{P} C. Petsche, \textit{S-integral preperiodic points for dynamical systems over number fields}, Bull. London Math. Soc., \textbf{40} (2008), no. 5, 749-758.


\bibitem{Sch4} A. Schinzel, \textit{Some unsolved problems on polynomials}, Matematicka Biblioteka, \textbf{25} (1963), 63-70.

\bibitem{Sch2} A. Schinzel, \textit{On the reducibility of polynomials and in particular of trinomials}, Acta Arith., \textbf{11} (1965), 1-34.

\bibitem{Sch} A. Schinzel, \textit{ Reducibility of lacunary polynomials III}, Acta Arith., \textbf{34} (1978), 227-266.

\bibitem{Sch3} A. Schinzel, \textit{Selected topics on polynomials}, University of Michigan Press, Ann Arbor, Michigan, 1982.

\bibitem{Si93} J. H. Silverman, {\it Integer points, Diophantine approximation, and iteration of
rational maps}, {Duke Math. J.}, {\bf 71} (1993), 793-829.

\bibitem{Yap} J. W. Yap, \textit{Uniform bounds on $S$-integral preperiodic points for power and Latt\`{e}s maps}, preprint, 2023.

\bibitem{Young} M. Young, \textit{Effective bounds on $S$-integral preperiodic points for polynomials}, preprint, 2023.

\bibitem{YThesis} M. Young, \textit{Effective integrality results in arithmetic dynamics}, PhD thesis, University of Cambridge, 2023, available at \url{https://www.repository.cam.ac.uk/items/0d5a5ef6-da9e-40e3-98ad-55ff21724924}.

\bibitem{Yu} K. R. Yu, \textit{$P$-adic logarithmic forms and group varieties III}, Forum Math. \textbf{19} (2007), 187-280.

\bibitem{YZ}  X. Yuan
and S. Zhang, \textit{The arithmetic Hodge index theorem for adelic line bundles}, Math. Ann., {\bf 367} (2017), 1123--1171.

\end{thebibliography}
\end{document}